\documentclass[11pt,reqno]{amsart}

\usepackage{amsmath,amssymb,mathtools}
\usepackage{enumitem}
\usepackage{microtype}
\usepackage[colorlinks=true,linkcolor=blue,citecolor=red,urlcolor=blue]{hyperref}

\numberwithin{equation}{section}

\newtheorem{theorem}{Theorem}[section]
\newtheorem{proposition}[theorem]{Proposition}
\newtheorem{lemma}[theorem]{Lemma}

\theoremstyle{definition}
\newtheorem{definition}[theorem]{Definition}
\theoremstyle{remark}

\newcommand{\R}{\mathbb{R}}
\newcommand{\Sph}{\mathbb{S}}
\newcommand{\inner}[2]{\left\langle #1,#2\right\rangle}
\newcommand{\rec}{\operatorname{rec}}
\newcommand{\Vol}{\operatorname{Vol}}
\newcommand{\Int}{\operatorname{int}}
\newcommand{\id}{\operatorname{Id}}
\newcommand{\cH}{\mathcal{H}}
\newcommand{\cR}{\mathcal{R}}

\title[Rigidity of Immersed Gauss Curvature Shrinkers]
{Rigidity of Immersed Gauss Curvature Shrinkers in Euclidean Space}
\author[J. Wang]{Jiahao Wang}
\date{}

\subjclass[2020]{Primary 53C42; Secondary 53C24, 52A20}

\keywords{Gauss curvature shrinker, immersed hypersurface, rigidity}

\address{Department of Mathematical Sciences, Tsinghua University, Beijing
100084, P.R. China}

\email{jh-wang24@mails.tsinghua.edu.cn}

\begin{document}

\begin{abstract}
For every $m\geq2$, we prove that any connected closed oriented smooth
immersion $X:M^m\to\R^{m+1}$ satisfying
\[
    K=\inner{X}{\nu}
\]
is an embedding onto the unit sphere.  No convexity is assumed.  
At a point maximizing $|X|$, we find a strictly convex component of
$\{K>0\}$.  If this component does not coincide with $M$, its polar immersion bounds
an unbounded convex set.  A finite area lemma then extends the local
surface area formula for the polar body to a global measure identity.  Using
the vanishing first moment of surface area measure, we show that the origin
lies in the interior of the polar body, contradicting the existence of
recession directions.  Thus the hypersurface is strictly convex, and the
classification of strictly convex shrinkers completes the proof.
\end{abstract}

\maketitle

\section{Introduction}

Let $X(\cdot,t):M^m\to\R^{m+1}$ be a family of smooth closed strictly convex
hypersurfaces with outward unit normal $\nu$ and Gauss--Kronecker
curvature $K$.  The $\alpha$-Gauss curvature flow is
\begin{equation}\label{eq:GCF-intro}
    \partial_tX=-K^\alpha\nu,
    \qquad \alpha>0.
\end{equation}

The case $\alpha=1$ was introduced by Firey as a model for the wearing of
stones.  Firey~\cite{Firey1974} proved asymptotic roundness for centrally
symmetric convex surfaces, subject to existence and regularity, and
conjectured that the symmetry assumption could be removed.
Tso~\cite{Tso1985} established the basic existence and regularity theory
for compact smooth strictly convex hypersurfaces, and
Andrews~\cite{Andrews1999} proved Firey's conjecture in $\R^3$.

A homothetically shrinking solution keeps the same shape while its scale
changes.  After placing the shrinking point at the origin and fixing the
scale, the resulting time-independent profile $X$ satisfies
\begin{equation}\label{eq:alpha-shrinker-intro}
    K^\alpha=\inner{X}{\nu}.
\end{equation}
We call such a profile an $\alpha$-Gauss shrinker.  The convergence results
of Andrews~\cite{Andrews2000}, Guan and Ni~\cite{GuanNi2017}, and Andrews,
Guan, and Ni~\cite{AndrewsGuanNi2016} show, in the relevant ranges of
$\alpha$, that a rescaled strictly convex flow converges to a smooth
strictly convex shrinker.  Thus, once this convergence is known, identifying
the limiting shape reduces to classifying the shrinkers.

Several important classification results are known.  At the affine
critical exponent $\alpha=1/(m+2)$, Calabi~\cite{Calabi1972} showed that
the strictly convex closed shrinkers are ellipsoids.  Choi and
Daskalopoulos~\cite{ChoiDaskalopoulos2016} classified strictly convex
closed shrinkers for $1/m<\alpha<1+1/m$.  Brendle, Choi, and
Daskalopoulos~\cite{BrendleChoiDaskalopoulos2017} completed the range
$\alpha\geq1/(m+2)$: every closed smooth strictly convex solution of
\eqref{eq:alpha-shrinker-intro} is a round sphere when
$\alpha>1/(m+2)$, while at $\alpha=1/(m+2)$ it is an ellipsoid.  In
particular, for the classical Gauss curvature flow $\alpha=1$, the only
strictly convex closed shrinker is the round sphere.  Ivaki and
Milman~\cite{IvakiMilman2023} later gave a convex geometric proof of this
isotropic uniqueness result and obtained related extensions.

The Brendle--Choi--Daskalopoulos argument relies on strict convexity.  
It makes the second fundamental form positive definite, so
its inverse $b=A^{-1}$ exists and the operator
$\alpha K^\alpha b^{ij}\nabla_i\nabla_j$ is elliptic.  Their test functions
and maximum principle argument use both $b$ and the reciprocal of the
smallest principal curvature.  At a point where the smallest principal curvature 
vanishes, these quantities are not defined and the operator degenerates, while 
at a nonconvex point, the operator need not be elliptic.  The study of weakly convex 
flows with flat sides shows that this is a real issue. Daskalopoulos and
Hamilton~\cite{DaskalopoulosHamilton1999} proved short-time existence and
regularity up to the boundary between the flat and curved parts, and
Daskalopoulos and Lee~\cite{DaskalopoulosLee2004} studied that boundary for
as long as the flat part remains.  These results begin with weak convexity
and do not imply convexity for an arbitrary immersed stationary solution.

To pass from a strictly convex component to global strict convexity, we use
the cone-volume measure of a convex body.
The logarithmic Minkowski problem asks which measures on the sphere arise
as cone-volume measures of convex bodies.  For a convex body
$C\subset\R^{m+1}$ with the origin in its interior, its cone-volume measure
is $\frac{1}{m+1}h_C\,dS_C$, where $h_C$ is the support function and $S_C$
is the surface area measure, see \cite{BoroczkyLutwakYangZhang2013}.
In our proof, the product $h_C\,dS_C$ arises for a convex body that is
initially known only to contain the origin.

The present argument does not require an existence or uniqueness theorem for the
logarithmic Minkowski problem.  It uses only an interiority property of the measure
equation
\[
    h_C\,dS_C=d\omega.
\]
Saroglou~\cite[Remark~3.5]{Saroglou2021} observed that if a convex body
containing the origin satisfies this identity, then the origin lies in its
interior. We use this conclusion to show that the strictly convex component 
constructed in Section~\ref{sec:convex-component-polar-body} must equal $M$.

The standard strictly convex shrinker $K=\langle X,\nu\rangle$ is therefore completely
understood.  The remaining question is whether convexity follows from the
shrinker equation itself.  We answer this affirmatively for every closed oriented
hypersurface, without assuming any convexity. We use the convention $A=d\nu$ for the 
shape operator and write $K=\det A$ for the Gauss curvature. Our main result is as follows.

\begin{theorem}[Main theorem]\label{thm:main}
Let $X:M^m\longrightarrow\R^{m+1}$ be a smooth immersion of a connected closed orientable hypersurface
in $\R^{m+1}$ with $m\geq2$.  Suppose that a globally defined smooth unit normal
$\nu$ satisfies
\begin{equation}\label{eq:shrinker}
    K=\inner{X}{\nu}
    \qquad\text{on }M.
\end{equation}
Then $X$ is an embedding and
\[
    X(M)=\Sph^m(1):=\{x\in\R^{m+1}:|x|=1\}.
\]
\end{theorem}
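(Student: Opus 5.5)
The plan follows the strategy outlined in the abstract: find a strictly convex piece of $M$, show it must be all of $M$, and then invoke the strictly convex classification.

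\emph{Step 1: a strictly convex component.} Let $p\in M$ maximize $|X|$. At $p$ the normal is $\pm X/|X|$, and $M$ lies inside the ball of radius $R=|X(p)|$. Comparison with the sphere of radius $R$ gives $A\geq R^{-1}\id$ at $p$, up to the sign convention $A=d\nu$. If $\nu(p)=-X/|X|$, then $K(p)=\pm R^{-m}$ would have to equal $-R$; I expect the sign bookkeeping, including the parity of $m$, to rule this out. So $\nu(p)=X/R$ and $K(p)=R>0$ with $A>0$. Let $U$ be the connected component of $\{K>0\}$ containing $p$. By continuity the eigenvalues of $A$ never change sign on $U$, so $A>0$ throughout $U$ and $U$ is locally strictly convex. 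On $U$ we have $\inner{X}{\nu}=K>0$. Hence the polar map $Y=\nu/\inner{X}{\nu}=\nu/K$ is an immersion of $U$ that is locally strictly convex; this is the standard polar duality between support function and radial function.

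\emph{Step 2: if $U\neq M$, the polar image bounds an unbounded convex set.} On $\partial U$ we have $K\to0$, so $|Y|\to\infty$. Using local strict convexity together with properness of $Y$ (since $|Y|\to\infty$ at the boundary), I would run a Hadamard-type argument to show that $Y$ embeds $U$ as the boundary of a closed unbounded convex set $C^*$ with nonempty recession cone. The key local identity comes from the polar body: the support function is $h_{C^*}(\xi)=1/|X|$-type expressions in the direction $X/|X|$. Pulling back, the shrinker equation $K=\inner{X}{\nu}$ turns the local surface area formula into $h_{C^*}\,dS_{C^*}=c\,d\omega$ on the set of normals that are attained. Concretely, I would compute that $dS$ of the polar, written in the radial parametrization, equals $K^{-1}\inner{X}{\nu}^{-m-1}|X|^{\dots}$, and then substitute the shrinker equation.

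\emph{Step 3: finite area and the global identity.} I expect this to be the main obstacle. One must show that the measure identity holds globally on the sphere. This requires that the total measure $\int h\,dS$ be finite, that nothing concentrates on the normals of the recession directions, and that the relevant normal set is the full hemisphere or sphere. I would try to bound the area by the change of variables $\int_U(\cdots)\,d\mu_M$ over the compact manifold $M$, using that the integrand is controlled by $|X|^{-m-1}$ type factors after the substitution $K=\inner{X}{\nu}$. Once the identity $h\,dS=d\omega$ holds, the first moment gives $\int\xi\,dS(\xi)=0$, extended to unbounded sets by truncation. Saroglou's observation (Remark 3.5) then places the origin in the interior of $C^*$. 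But $C^*$ is unbounded, so it contains a recession ray from an interior point, and this forces $h_{C^*}=+\infty$ on an open set of directions. That contradicts the finiteness of the measure identity. Hence $U=M$.

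\emph{Step 4: conclusion.} Now $M$ is a closed, locally strictly convex immersed hypersurface. By Hadamard's theorem it is embedded as the boundary of a strictly convex body. Brendle--Choi--Daskalopoulos with $\alpha=1>1/(m+2)$ then shows it is a round sphere centered at the origin. Finally, $K=\inner{X}{\nu}$ forces $r^{-m}=r$, so $r=1$.
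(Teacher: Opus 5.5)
There is a genuine gap in Steps 2--3: you run the measure identity and Saroglou's argument on the unbounded set $C^*$ bounded by $Y(U)$, and none of it works there. First, the local identity is false. You correctly have $h_{C^*}(X/|X|)=1/|X|$. But the cone-volume element of $Y$ is $\inner{Y}{N^*}\,d\mathrm{area}_Y=|Y|^{m+1}\nu^*d\omega=K^{-m}\,d\mu$, while the Gauss map $N^*=X/|X|$ of $\partial C^*$ pulls $d\omega$ back to $K|X|^{-m-1}\,d\mu$. Hence on the normal image $h_{C^*}\,dS_{C^*}=\bigl(|X|/\inner{X}{\nu}\bigr)^{m+1}d\omega$, which is a constant multiple of $d\omega$ only if $X^\top\equiv0$. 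Second, globally things are worse. $S_{C^*}$ has infinite mass and is supported in $\{\xi:\inner{\xi}{v}\leq0\ \forall v\in\cR\}$, which misses an open hemisphere. So $h\,dS=d\omega$ on $\Sph^m$ can never hold for $C^*$. Nor does the vanishing of the first moment survive truncation, since truncating adds a cut face to the surface area measure. Third, Saroglou's lemma is about convex bodies, and its conclusion $0\in\Int C^*$ is already known from $\overline B_{1/R_0}(0)\subset C^*$. That $h_{C^*}=+\infty$ on an open set of directions contradicts nothing, because those directions are not normals of $\partial C^*$ and carry no mass. So Step 3 produces no contradiction.

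The missing idea is to pass to the compact polar body $C:=(C^*)^\circ$. It is bounded because $C^*\supset\overline B_{1/R_0}(0)$, and it has nonempty interior because $\cR$ is pointed. For each $p\in U$, $X(p)\in C$ is the boundary point with outer normal $\nu(p)$. Hence $h_C=K\circ\nu^{-1}$ on $D=\Sph^m\setminus\Gamma$, while $h_C=0$ exactly on $\Gamma=\cR\cap\Sph^m\neq\varnothing$; that is, the origin lies on $\partial C$. Since $\nu:U\to D$ is a diffeomorphism with Jacobian $K=h_C\circ\nu$, one gets $h_C\,dS_C=d\omega$ on $D$ and $\int_D h_C^{-1}\,d\omega=\Vol_g(U)<\infty$; this is the change of variables you anticipated. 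To globalize you still need $\omega(\Gamma)=0$, and your proposal gives no mechanism for it. The paper uses that $h_C$ is Lipschitz and vanishes on the spherical section of a convex cone. If $\Gamma$ had interior, the bound $h_C\leq L\,d_{\Sph^m}(\cdot,\Gamma)$ on a collar would make $\int_D h_C^{-1}\,d\omega$ diverge logarithmically. Then $h_C\,dS_C=d\omega$ on all of $\Sph^m$, since any mass of $S_C$ on $\Gamma$ is killed by $h_C=0$. The interiority lemma gives $0\in\Int C$, i.e.\ $h_C>0$ everywhere, contradicting $h_C|_\Gamma=0$. A minor point in Step 1: for odd $m$ the case $\nu(p)=-X/|X|$ is not excluded by parity. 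You must instead replace $\nu$ by $-\nu$, which preserves $K=\inner{X}{\nu}$ because $K\mapsto(-1)^mK$.
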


We briefly describe the proof.  At a maximum point of $|X|$, the global normal 
can be chosen to point in the direction of $X$. It also yields a strictly convex
component $U$ of $\{K>0\}$.  If this component does not coincide with $M$, the polar 
immersion $Y=\nu/K$ is complete and locally strictly convex. Van Heijenoort's 
theorem then identifies its image with the boundary of an unbounded convex set.  
Away from the recession directions, the support function of the polar body 
gives the local surface area formula $dS_C=h_C^{-1}d\omega$. The Finite area lemma
forces the recession set to have spherical measure zero, which leads to a 
global measure identity $h_C\,dS_C=d\omega$.  The interiority lemma then makes $h_C$
positive everywhere, contradicting its vanishing on the nonempty recession
set.  Thus the component is all of $M$, and the classification of
Brendle--Choi--Daskalopoulos completes the proof.

\medskip
The paper is organized as follows.  Section~\ref{sec:preliminaries} fixes
the geometric conventions, and states the results used later.  In
Section~\ref{sec:convex-component-polar-body}, we select the strictly convex component 
determined by a farthest point, and construct its polar immersion.  We then 
study the resulting unbounded convex set and the
support function of its polar body.  In Section~\ref{sec:global-measure-identity}, a
finite area lemma is proved, which will be used to pass from the local surface area formula to a
global measure identity. Finally, Section~\ref{sec:proof-main-theorem} completes the 
proof of Theorem~\ref{thm:main}.

\section{Preliminaries}\label{sec:preliminaries}

We begin by fixing some notation.
Let $m\geq2$, and $X:M^m\to\R^{m+1}$ be a smooth immersion of a
connected, closed, orientable manifold. Let $\nu$ be an arbitrary smooth global 
unit normal along $X$. The Euclidean metric and its Levi-Civita connection are
$g_{\mathrm{Euc}}:=\inner{\cdot}{\cdot}$ and
$\nabla^{\R^{m+1}}$, respectively. The induced metric, connection, and
volume element on $M$ are $g:=X^*g_{\mathrm{Euc}}$, $\nabla$, and $d\mu$.
On $\Sph^m$, the corresponding objects are $\sigma$, $\bar\nabla$, and
$d\omega$. With the convention $A=d\nu$ for the shape operator, the
Gauss formula for tangent vector fields $v,w$ is given by
\begin{equation}\label{eq:gauss-formula}
    \nabla^{\R^{m+1}}_vw
      =\nabla_vw-\inner{A(v)}{w}\nu.
\end{equation}

We next recall some basic concepts from convex geometry.

\begin{definition}\label{def:convex-body}
A \emph{convex body} in $\R^{m+1}$ is a compact convex set with nonempty
interior.
\end{definition}

\begin{definition}\label{def:polar-set}
Let $E\subset\R^{m+1}$ be a closed convex set containing the origin. The
\emph{polar set} of $E$ is
\begin{equation}\label{eq:polar-definition}
    E^\circ
      =\{x\in\R^{m+1}:\inner{x}{z}\leq1
          \text{ for every }z\in E\}.
\end{equation}
When $E^\circ$ is a convex body, we call it the \emph{polar body} of $E$.
\end{definition}

\begin{definition}\label{def:recession}
For a nonempty closed convex set $E\subset\R^{m+1}$, its \emph{recession cone} is
\begin{equation}\label{eq:rec-definition}
    \rec(E)
      =\{v\in\R^{m+1}:z+tv\in E
          \text{ for all }z\in E\text{ and }t\geq0\}.
\end{equation}
A convex cone $\cR$ is \emph{pointed} if
$\cR\cap(-\cR)=\{0\}$.
\end{definition}

If $C$ is a compact convex set, its \emph{support function} is
\begin{equation}\label{eq:support-definition}
    h_C(n)=\max_{x\in C}\inner{x}{n},
    \qquad n\in\Sph^m.
\end{equation}

We write $\cH^m$ for the $m$-dimensional Hausdorff measure and
$\dim_{\cH}E$ for the Hausdorff dimension of a set $E$.
We define the surface area measure of a convex body as follows.

\begin{definition}\label{def:surface-area-measure}
Let $C\subset\R^{m+1}$ be a convex body. The \emph{surface area measure} 
of $C$ is a finite Borel measure $S_C$ on $\Sph^m$ defined by
\[
    S_C(B)
      =\cH^m\bigl(\{x\in\partial C: \text{there exists } v\in B \text{ such that } \inner{x}{v}=h_C(v)\}\bigr)
\]
for every Borel set $B\subset\Sph^m$.
\end{definition}

For a symmetric $(0,2)$-tensor $T$ on $\Sph^m$, we write
\begin{equation}\label{eq:det_sigma}
    \det_\sigma T=\det(\sigma^{-1}T).
\end{equation}

If $h_C\in C^2(V)$ on an open set $V\subset\Sph^m$, define
\begin{equation}\label{eq:radius-tensor}
    r[h_C]:=\bar\nabla^2h_C+h_C\sigma
\end{equation}
to be the curvature radius tensor. If $r[h_C]$ is positive definite on $V$, 
then the corresponding part of the boundary is strictly convex. We also have the surface area formula
\begin{equation}\label{eq:area-density}
    dS_C=\det_\sigma(r[h_C])\,d\omega
    \qquad\text{on }V.
\end{equation}
These facts hold locally and do not require $h_C$ to be smooth on all of
$\Sph^m$, see \cite[Chapters~1 and~4]{Schneider2014} and
\cite[Remark~2.1]{ColesantiHugSaorin2014}.

We now list two theorems and the interiority lemma used in our paper.

\begin{theorem}[Van Heijenoort \cite{vanHeijenoort1952};
Jonker--Norman \cite{JonkerNorman1973}]
\label{thm:van-heijenoort}
Let $m\geq2$ and let $F:N^m\to\R^{m+1}$ be a complete locally convex 
immersion of a connected manifold without boundary. If $F$ is strictly 
locally convex at one point, then $F$ is an embedding onto the entire 
boundary of a closed convex set with nonempty interior.
\end{theorem}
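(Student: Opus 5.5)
Since this is the classical theorem of van Heijenoort, I would follow the height-function continuity argument rather than any global Gauss map argument.

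\textbf{Setup at the strict point.} Fix the point $p\in N$ where $F$ is strictly locally convex. Orient the normal $n$ so that the second fundamental form is positive semidefinite everywhere; this is possible because $N$ is connected and $F$ is locally convex. Let $\ell(x)=\inner{F(p)-F(x)}{n(p)}\ge0$ be the height above the tangent hyperplane $T$ at $p$. By strict convexity at $p$, for small $t>0$ the component $\Omega_t$ of $\{\ell<t\}$ containing $p$ has compact closure. Moreover, $F$ maps $\Omega_t$ to the graph of a convex function $u_t$ over a bounded convex domain $D_t\subset T$, with boundary $\{\ell=t\}$.

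\textbf{Continuity in $t$.} Let $t^*$ be the supremum of those $t$ for which the following holds: $\overline{\Omega_t}$ is compact and $F|_{\Omega_t}$ is an embedded convex graph over a convex domain $D_t$ with $F(\partial\Omega_t)\subset\partial D_t\times\{t\}$. I would show the admissible set of $t$ is open and closed in $(0,t^*)$.
\begin{itemize}
\item \emph{Openness.} Along $\partial\Omega_t$ the tangent planes are not vertical. Otherwise local convexity, together with the supporting property of the convex graph from below, would force a segment in the boundary. That contradicts the graph being strictly over a convex domain near $p$ and propagating convexity. So the implicit function theorem extends the graph slightly past level $t$.
\item \emph{Closedness, and the main obstacle.} The delicate point is proving that the boundary level sets $F(\partial\Omega_t)$ are embedded and bound the convex sets $D_t$. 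These level sets are closed locally convex hypersurfaces of dimension $m-1\ge1$ in the hyperplane $\{\ell=t\}$. Here I would use $m\ge2$. The closed convex graph gives each level curve winding number one: its tangent hyperplanes rotate monotonically and it is the boundary of the projection of a convex graph. Then the Hadamard-type theorem in one lower dimension, applied inductively or via the degree of the Gauss map of the level set, gives embeddedness. This fails for $m=1$, where locally convex curves can wind several times, so this is the step to handle most carefully.
\end{itemize}

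\textbf{Conclusion.} Two cases arise as $t\uparrow t^*$.
\begin{itemize}
\item If the domains $D_t$ stay bounded and $t^*<\infty$, then completeness of $N$ forces the graphs to close up: at $t^*$ the tangent planes become vertical along the whole level set. A symmetric argument from the top then shows $F(N)$ is a closed convex hypersurface.
\item If $t^*=\infty$, or the domains $D_t$ become unbounded, then $\bigcup_t\Omega_t$ is an open and closed subset of $N$ and hence equals $N$. Completeness is what guarantees that no boundary escapes to finite distance. The image is then the graph of a proper convex function over an open convex domain $D$, possibly together with vertical rays over $\partial D$.
\end{itemize}
In every case $F$ is injective. $F(N)$ is closed in $\R^{m+1}$, since completeness makes $F$ proper onto its image, and $F(N)$ lies on one side of each tangent hyperplane. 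Hence $F(N)$ is the full boundary of the closed convex set formed by the intersection of these supporting half-spaces, and this set has nonempty interior because it contains the region above the strictly convex cap at $p$.
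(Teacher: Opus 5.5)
The paper does not prove this statement. It quotes it from van Heijenoort and Jonker--Norman and uses it as a black box; in both applications, to $Y$ on $U$ and to $X$ on $M$, the hypersurface is oriented with positive definite second fundamental form. Your sketch therefore has to stand on its own, and it has a genuine gap: the continuity hypothesis ``$F(\Omega_t)$ is a convex graph over the fixed plane $T$'' is the wrong one. It breaks down as soon as some normal becomes orthogonal to $n(p)$, which is irrelevant to the theorem, and your conclusion step then draws false inferences.

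\begin{itemize}
\item For the ellipsoid $\{\inner{x}{Qx}=1\}$, the shadow boundary $\{\inner{n(x)}{n(p)}=0\}$ is the section by the hyperplane $(Q^2p)^\perp$, while the level sets of $\ell$ lie in translates of $(Qp)^\perp$. Unless $p$ is an eigenvector of $Q$, the tangent plane at $t^*$ is vertical at a single point, not along the whole level set. So there is nothing for a ``symmetric argument from the top'' to start from.
\item For the paraboloid $x_{m+1}=|x'|^2$ with $p=(a,|a|^2)$, $a\neq0$, one finds $\ell=c\,|x'-a|^2$ on the surface with $c>0$. Hence every $\Omega_t$ is a compact cap and the $D_t$ stay bounded. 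Yet the normal, proportional to $(2x',-1)$, is orthogonal to $n(p)$ on $\{4\inner{a}{x'}+1=0\}$, which the caps reach at a finite height. Your first case would then declare this complete unbounded surface closed.
\end{itemize}

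There are further problems. The open/closed argument on $(0,t^*)$ is vacuous, because admissibility is monotone in $t$. Your reason why boundary tangent planes are never vertical fails at $t^*$ for the round sphere (the equator). The ``winding number one'' step presupposes the convex graph it is meant to produce. Finally, the last paragraph assumes the two facts that make up the theorem: that $F(N)$ lies on one side of every tangent hyperplane, and that $F$ is proper. Completeness of an immersion does not give properness.

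What is missing is the right continuity statement. Call $t$ admissible if $\overline{\Omega_t}$ is compact and $F$ maps it injectively onto a \emph{convex cap}, i.e.\ onto $\partial B_t$ minus the relative interior of the face $B_t\cap\{\ell=t\}$, for some convex body $B_t\subset\{\ell\le t\}$. No graph condition over $T$ is imposed, so vertical tangent planes cause no trouble.

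\begin{itemize}
\item The real obstruction to continuing is a \emph{critical} point of $\ell$ on $\partial\Omega_t$, where the tangent plane is parallel to $T$; this is where $m\ge2$ enters. In the strictly convex situation of this paper, such a point is a nondegenerate local maximum of $\ell$. Its punctured neighbourhood is connected when $m\ge2$ and therefore lies in $\Omega_t$, so the cap closes up and $N$ is compact. For $m=1$ this fails, as a locally convex curve of rotation index two shows.
\item At regular levels, you push the cap past the compact set $\partial\Omega_t$ using local convexity together with a local-to-global convexity criterion of Tietze--Nakajima type.
\item Closedness of the admissible set uses completeness and a bound on the intrinsic diameter of a bounded convex cap. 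Strict convexity at $p$ keeps the caps bounded at every finite level.
\item If every $t>0$ is admissible, then $\bigcup_t\Omega_t$ is open and closed in $N$, and $\bigcup_tB_t$ is the required convex set.
\end{itemize}

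With this set-up, injectivity, properness and the supporting property of tangent hyperplanes come out of the construction rather than being assumed.
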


The following lemma is the $p=0$ case of an
observation of Saroglou~\cite[Remark~3.5]{Saroglou2021}.  We give a
proof here for the reader's convenience. 

\begin{lemma}[Interiority lemma, Saroglou \cite{Saroglou2021}]
\label{lem:interiority}
Let $N\geq2$, $C\subset\R^N$ be a convex body with $0\in C$.
Let $h_C$ be the support function of $C$, $S_C$ denote its surface area measure.  If
\begin{equation}\label{eq:interiority-measure}
    h_C\,dS_C=d\omega
\end{equation}
as Borel measures on $\Sph^{N-1}$, then $0\in\Int C$.
\end{lemma}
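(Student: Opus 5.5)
Suppose, for contradiction, that $0\in\partial C$. The plan is to find a set of positive spherical measure on which $h_C$ vanishes. This contradicts \eqref{eq:interiority-measure}, because the left-hand side gives zero mass to $\{h_C=0\}$.

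First, describe the zero set. Since $0\in C$, we have $h_C\geq0$ everywhere. Since $0\in\partial C$, the normal cone of $C$ at $0$,
\[
N_C(0)=\{n\in\Sph^{N-1}:\inner{x}{n}\leq0\ \text{for all }x\in C\},
\]
is nonempty, and $h_C(n)=0$ exactly for $n\in N_C(0)$. Evaluating \eqref{eq:interiority-measure} on $N_C(0)$ gives $\omega(N_C(0))=\int_{N_C(0)}h_C\,dS_C=0$. If the normal cone has nonempty interior, which happens when $0$ is a vertex of $C$, we already have a contradiction.

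In general, however, $N_C(0)$ may be a single direction $n_0$. For example, $0$ could be a smooth boundary point. So I need a quantitative argument near $n_0$. Take a unit vector $n_0\in N_C(0)$, so that $C\subset\{\inner{x}{n_0}\leq0\}$.

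Let $B_\epsilon$ be the spherical cap of radius $\epsilon$ around $n_0$. By \eqref{eq:interiority-measure}, $\omega(B_\epsilon)=\int_{B_\epsilon}h_C\,dS_C$. The left side is comparable to $\epsilon^{N-1}$. On the right, $h_C$ is Lipschitz with $h_C(n_0)=0$, so $h_C\leq D\epsilon$ on $B_\epsilon$, where $D=\operatorname{diam}C$. Hence
\[
S_C(B_\epsilon)\gtrsim \epsilon^{N-2}.
\]

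To contradict this, I would bound the surface area measure near $n_0$. I expect this step to be the main obstacle. For $N\geq3$ the lower bound $\epsilon^{N-2}$ tends to $0$, so a naive bound does not work. I will therefore use the full family of caps rather than a single one.

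The idea is to integrate the identity against a test function and use the first-moment relation $\int_{\Sph^{N-1}}u\,dS_C(u)=0$. More concretely, for any $x\in C$, integrate
\[
\int (h_C(u)-\inner{x}{u})\,dS_C(u)=N\Vol(C)
\]
(this holds by the first moment relation). Since $h_C\,dS_C=d\omega$, choosing $x=0$ gives
\[
N\Vol(C)=\int h_C\,dS_C=\omega(\Sph^{N-1}).
\]
This gives finiteness only.

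So the real step is different. Since $h_C>0$ off $N_C(0)$, the identity gives $dS_C=h_C^{-1}\,d\omega$ away from the null set $N_C(0)$. Moreover, $S_C(N_C(0))$ equals the area of the face $F(C,n_0)$, which contains $0$. Applying the first moment relation,
\[
0=\int u\,dS_C=\int_{\Sph^{N-1}\setminus N_C(0)} \frac{u}{h_C(u)}\,d\omega(u)+\int_{N_C(0)}u\,dS_C.
\]
Pairing with $n_0$ and using $h_C(u)\leq D|u-n_0|$, which requires $h_C(u)\leq\inner{x_u}{u-n_0}$ with $x_u\in C$ having $\inner{x_u}{n_0}\leq0$, the integrand $\inner{u}{n_0}/h_C(u)$ is at least of order $|u-n_0|^{-1}$ near $n_0$. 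That is integrable for $N\geq3$, so this bound alone is not decisive. I would therefore sharpen it as follows: for $u$ in the hemisphere $\inner{u}{n_0}>0$, the bound $h_C(u)\leq D\,|u-\inner{u}{n_0}n_0|$ shows that $\inner{u}{n_0}/h_C(u)$ is bounded below uniformly there, while on the opposite hemisphere it is dominated by $h_C(u)\geq\inner{x}{-u}$ for an interior point $x$.

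I would then aim to show that the $n_0$-component of $\int u\,h_C^{-1}d\omega$ is strictly positive, with the face term also nonnegative in that pairing. This would give a contradiction, forcing $0\in\Int C$. The delicate point, where I expect to have to work hardest, is the comparison of the two hemispheres. My fallback is Saroglou's original route: approximate $C$ by translates $C-t x_0$ with $x_0\in\Int C$, and exploit strict monotonicity of $\int\log h\,d\omega$ under translation.
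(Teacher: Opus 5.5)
There is a genuine gap. Your framework matches the paper's: pair the vanishing first moment $\int\inner{u}{v}\,dS_C(v)=0$ with a direction $u$ where $h_C$ vanishes, and split $\Sph^{N-1}$ into $T=\{h_C=0\}=N_C(0)$, which is $\omega$-null, and its complement, where $dS_C=h_C^{-1}\,d\omega$ with $h_C^{-1}$ integrable. But the two steps that actually produce the contradiction are left open, and your estimates cannot close them.

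\textbf{Choice of direction.} For an arbitrary $n_0\in T$, the face term $\int_T\inner{v}{n_0}\,dS_C(v)$ need not be nonnegative. $T$ is the spherical section of a pointed convex cone and may contain directions at an obtuse angle to $n_0$, for instance if $0$ lies on a sharp edge of $C$. In that case $1/h_C$ is also unbounded on the hemisphere $\{\inner{v}{n_0}<0\}$. (Relatedly, $S_C(T)$ is the area of all faces with normals in $T$, not only of $F(C,n_0)$.) The direction has to be chosen. The paper takes $u=w/|w|$, where $w$ is the point of minimal norm in $\operatorname{conv}(T)$. Since $\inner{a}{v}\leq-r$ on $T$ for any ball $\overline B_r(a)\subset C$, one has $w\neq0$, and then $h_C(u)=0$ and $\inner{u}{v}\geq|w|>0$ for all $v\in T$.

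\textbf{Comparing the hemispheres.} This is the real missing idea. Strict positivity of $I=\int_{\Sph^{N-1}\setminus T}\inner{u}{v}\,h_C(v)^{-1}\,d\omega(v)$ cannot come from separate estimates on the two hemispheres. An upper bound for $h_C$ where $\inner{u}{v}>0$ and a lower bound where $\inner{u}{v}<0$ have no reason to compare. Your claim that $\inner{v}{n_0}/h_C(v)$ is uniformly bounded below on the positive hemisphere is also false: the ratio tends to $0$ at equatorial points outside $T$.

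What is needed is a pointwise comparison of mirror points. With $R_uv=v-2\inner{u}{v}u$, write $v=R_uv+2\inner{u}{v}u$. Sublinearity and $h_C(u)=0$ give $h_C(v)\leq h_C(R_uv)$ whenever $\inner{u}{v}>0$, hence $h_C(v)^{-1}\geq h_C(R_uv)^{-1}$ there. The inequality is strict near $u$ because $h_C(-u)>0$ (otherwise $C\subset u^\perp$). Pairing $v$ with $R_uv$ gives $I>0$. Together with $\int_T\inner{u}{v}\,dS_C(v)\geq0$, this contradicts the vanishing first moment.

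Your sharpened bound $h_C(v)\leq \operatorname{diam}(C)\,|v-\inner{v}{n_0}n_0|$ actually comes from $h_C(v)\leq h_C(v-\inner{v}{n_0}n_0)$, which is one step from this. What closes the argument is replacing the projection by the reflection and keeping $h_C$ on the right instead of bounding it by the diameter. Your fallback via translates and $\int\log h\,d\omega$ is only named, not carried out, so as written the proposal does not prove the lemma.
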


\begin{proof}
Suppose, to the contrary, that $0\in\partial C$, and set
\begin{equation}\label{eq:interiority-T-D}
    T=\{v\in\Sph^{N-1}:h_C(v)=0\},
    \qquad
    D=\Sph^{N-1}\setminus T.
\end{equation}
Then $T$ is nonempty and compact.
Choose a closed ball
$\overline B_r(a)\subset C$ with $r>0$.  For every $v\in T$,
\[
    0=h_C(v)\geq\inner{a}{v}+r,
\]
and hence $\inner{a}{v}\leq-r$.  It follows that
$0\notin\operatorname{conv}(T)$.

Let $w$ be the point of minimum Euclidean norm in the compact convex set
$\operatorname{conv}(T)$.  Thus $w\neq0$.  For each $v\in T$, minimality
along the segment from $w$ to $v$ gives
\[
    0\leq
    \left.\frac{d}{dt}\right|_{t=0+}
       |w+t(v-w)|^2
    =2\inner{w}{v-w}.
\]
Extend $h_C$ to $\R^N$ as the positively homogeneous support function of
$C$. Convexity and nonnegativity of $h_C$ imply $h_C(w)=0$.
Consequently, the unit vector
$u=w/|w|$ satisfies
\begin{equation}\label{eq:u-separates-T}
    h_C(u)=0,
    \qquad
    \inner{u}{v}\geq |w|>0
    \quad\text{for every }v\in T.
\end{equation}

Applying \eqref{eq:interiority-measure} to $T$ gives $\omega(T)=0$.
Moreover, since $h_C>0$ on $D$, the same measure identity implies
\[
    dS_C=\frac{1}{h_C}\,d\omega
    \qquad\text{on }D.
\]
In particular,
\begin{equation}\label{eq:interiority-integrability}
    \int_D\frac{1}{h_C}\,d\omega=S_C(D)<\infty.
\end{equation}

Define $f=h_C^{-1}$ on $D$ and set $f=0$ on $T$.  Let
\[
    R_uv=v-2\inner{u}{v}u
\]
be reflection across $u^\perp$, and put
\[
    H_+=\{v\in\Sph^{N-1}:\inner{u}{v}>0\}.
\]
For $v\in H_+$, subadditivity of the support function gives
\[
    h_C(v)
      \leq h_C(R_uv)+2\inner{u}{v}h_C(u)
      =h_C(R_uv).
\]
Outside the set $T\cup R_uT$, this yields
$f(v)\geq f(R_uv)$.  Moreover, $h_C(-u)>0$. Otherwise,
$h_C(u)=h_C(-u)=0$ implies $C\subset u^\perp$, contrary to the fact that
$C$ has nonempty interior.  Since $R_uu=-u$, continuity gives
$h_C(v)<h_C(R_uv)$ on a spherical neighborhood of $u$ contained
in $H_+$.  Removing $T\cup R_uT$ from this neighborhood leaves a set
of positive spherical measure, and on this set $f(v)>f(R_uv)$.

Pairing the two hemispheres, and using \eqref{eq:interiority-integrability},
we obtain
\begin{equation}\label{eq:positive-first-moment}
\begin{aligned}
    I&:=\int_{\Sph^{N-1}}\inner{u}{v}f(v)\,d\omega(v)\\
     &=\int_{H_+}\inner{u}{v}
          \bigl(f(v)-f(R_uv)\bigr)\,d\omega(v)>0.
\end{aligned}
\end{equation}

The surface area measure of every convex body has zero first moment. 
Splitting this identity over $D$ and $T$, and using
\eqref{eq:u-separates-T}--\eqref{eq:positive-first-moment} gives
\[
\begin{aligned}
    0=\int_{\Sph^{N-1}}\inner{u}{v}\,dS_C(v)=I+\int_T\inner{u}{v}\,dS_C(v)>0,
\end{aligned}
\]
a contradiction.  Hence $0\in\Int C$.
\end{proof}

Once strict convexity has been established globally, the final rigidity 
result is provided by the following theorem.

\begin{theorem}[Brendle--Choi--Daskalopoulos
\cite{BrendleChoiDaskalopoulos2017}]\label{thm:BCD}
Let $m\geq2$.  A closed smooth strictly convex hypersurface in
$\R^{m+1}$ satisfying
\[
    K=\inner{X}{\nu}
\]
with respect to its outward normal is a round sphere.
\end{theorem}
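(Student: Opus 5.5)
The statement immediately above is Theorem~\ref{thm:BCD}, which the paper quotes from Brendle--Choi--Daskalopoulos rather than proves. A complete proof is beyond the scope of a plan, so I only indicate the route I would follow: a maximum principle argument for a well-chosen test function, in the spirit of their paper.

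Since the hypersurface is closed and strictly convex, I parametrize it by its Gauss map $\nu:M\to\Sph^m$. I write the support function $u=\inner{X}{\nu}$ as a function on $\Sph^m$. Then the equation becomes
\[
\det_\sigma\bigl(\bar\nabla^2u+u\sigma\bigr)^{-1}=u,
\]
that is, $K=1/\det_\sigma(r[u])=u$. In particular $u>0$, so the origin lies in the interior. I then work with the inverse second fundamental form $b=A^{-1}$, which exists by strict convexity. I use the elliptic operator
\[
L=K\,b^{ij}\nabla_i\nabla_j,
\]
which is the linearization of the flow. Its ellipticity uses $b>0$, which is exactly where strict convexity enters.

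The key step is to find a quantity $Z$, built from $K$, $|X|^2$ and the principal curvatures, that satisfies an inequality $LZ\ge 0$ at an interior maximum, with equality forcing umbilicity. Following Brendle--Choi--Daskalopoulos, I would take
\[
Z=K\,\mathrm{tr}(b)-\tfrac{m}{2}|X|^2 ,
\]
or a variant with the reciprocal $\lambda_{\min}^{-1}$ of the smallest principal curvature. The first task is to compute the evolution-type identities for $K$, $u$, $|X|^2$ and $b$ under $L$, using the equation $K=u$ and the Codazzi equations.

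At a maximum point of $Z$, the gradient terms from Codazzi must be handled with care. This computation is the main obstacle. The positive terms have to absorb the bad terms from $\nabla b$, and doing this requires a delicate concavity inequality. Where $\lambda_{\min}$ is not smooth, one needs a viscosity or perturbation argument. Granting this, the maximum principle shows that $Z$ is constant, and the equality case gives $\nabla A=0$ with $A$ a multiple of the identity. Hence the surface is umbilic, and therefore a sphere centered at the origin. The equation $K=\inner{X}{\nu}$ then fixes the radius: on a sphere of radius $r$ one has $r^{-m}=r$, so $r=1$.
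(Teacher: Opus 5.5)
The paper does not prove this statement. It quotes it from Brendle--Choi--Daskalopoulos and uses it as a black box in Section~\ref{sec:proof-main-theorem}, so for the paper a citation suffices. Read as a proof, however, your outline has a genuine gap.

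The sentence ``Granting this'' skips exactly the content of the theorem: the computation of $LZ$ and the absorption of the Codazzi gradient terms. The logic around that step also does not work as stated. An inequality $LZ\geq0$ known only at a maximum point, together with $LZ\leq0$ there, gives information at that single point, at best umbilicity there. It does not give that $Z$ is constant, since the value of your $Z$ at an umbilic point is not a universal constant. To use the strong maximum principle you need a differential inequality such as $LZ\geq -C|\nabla Z|$ on a neighbourhood of the maximum set, or some further device. In Brendle--Choi--Daskalopoulos the trace quantity and the quantity built from $\lambda_{\min}^{-1}$ are not interchangeable variants. Their argument uses both, and that interplay is what your outline leaves out.

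Moreover, the one concrete choice you commit to is wrong. Write $u$ for the support function and $r=\bar\nabla^2u+u\sigma$. Then $K\operatorname{tr}(b)=u\operatorname{tr}_\sigma r$ and $|X|^2=|\bar\nabla u|^2+u^2$, and $L=u\,r^{ij}\bar\nabla_i\bar\nabla_j$ modulo first-order terms. Combine three facts: the twice-differentiated equation $\log\det_\sigma r=-\log u$, the commutation identity $\bar\nabla_i\bar\nabla_j\operatorname{tr}_\sigma r=\bar\Delta r_{ij}-m\,r_{ij}+(\operatorname{tr}_\sigma r)\,\sigma_{ij}$, and $\bar\Delta u=\operatorname{tr}_\sigma r-mu$. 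In $r^{ij}\bar\nabla_i\bar\nabla_j\bigl(K\operatorname{tr}(b)-\gamma|X|^2\bigr)$, the terms involving neither $\bar\nabla r$ nor $\bar\nabla u$ then add up to $(m-1-2\gamma)(\operatorname{tr}_\sigma r-mu)$. These cancel only for $\gamma=\frac{m-1}{2}$, which is the case $\alpha=1$ of the coefficient $\frac{m\alpha-1}{2\alpha}$ in the Brendle--Choi--Daskalopoulos test function. With your $\gamma=\frac m2$, a positive multiple of $mK-\operatorname{tr}(b)$ survives. That term has no sign, so the maximum-principle computation cannot close.

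Finally, umbilicity only gives some round sphere $\partial B_R(c)$. You must first evaluate the equation at $n$ and $-n$ to get $c=0$, as the paper does, before concluding $R^{-m}=R$.
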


\section{The convex component and the associated polar body}
\label{sec:convex-component-polar-body}

In this section, we derive differential identities from the shrinker equation
and use them to find a strictly convex component containing a point where
$|X|$ attains its maximum.  If this component does not coincide with $M$, we then
construct its polar immersion, and study the polar body of the convex set it bounds.

We write
\begin{equation}\label{eq:X-decomposition}
    X=X^\top+K\nu,
\end{equation}
where $X^\top$ is the tangential component of $X$.

\begin{lemma}\label{lem:differential-identities}
For any tangent vectors $v$, $w$,
\begin{align}
    \nabla K&=A(X^\top),\label{eq:grad-K}\\
    \nabla_vX^\top&=v-KA(v).\label{eq:grad-XT}
\end{align}
If $\rho:=\frac12|X|^2$, then
\begin{align}
    \nabla\rho&=X^\top,\label{eq:grad-rho}\\
    \nabla^2\rho(v,w)
      &=\inner{v}{w}-K\inner{A(v)}{w}.\label{eq:hess-rho}
\end{align}
\end{lemma}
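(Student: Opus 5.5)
The plan is to differentiate the decomposition \eqref{eq:X-decomposition} along $M$ and separate tangential from normal parts using the Gauss formula \eqref{eq:gauss-formula}, together with the Weingarten relation $\nabla^{\R^{m+1}}_v\nu=A(v)$ coming from the convention $A=d\nu$. All four identities come out of this single computation plus the shrinker equation \eqref{eq:shrinker}, which says precisely that the normal component $\inner{X}{\nu}$ equals $K$.

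First, for a tangent vector $v$ we have $\nabla^{\R^{m+1}}_vX=v$, since $dX$ is the inclusion of tangent vectors. On the other hand, differentiating $X=X^\top+K\nu$ and applying \eqref{eq:gauss-formula} to the tangent field $X^\top$ gives
\[
    v=\nabla_vX^\top-\inner{A(v)}{X^\top}\nu+(vK)\nu+KA(v).
\]
Comparing tangential parts yields \eqref{eq:grad-XT}: $\nabla_vX^\top=v-KA(v)$. Comparing normal parts yields $vK=\inner{A(v)}{X^\top}$. Since $A$ is self-adjoint with respect to $g$, this equals $\inner{v}{A(X^\top)}$ for all $v$, which is \eqref{eq:grad-K}.

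Second, for $\rho=\frac12|X|^2$ we compute $v\rho=\inner{v}{X}=\inner{v}{X^\top}$, because $v$ is orthogonal to $\nu$. This gives \eqref{eq:grad-rho}. Then
\[
    \nabla^2\rho(v,w)=\inner{\nabla_vX^\top}{w},
\]
and substituting \eqref{eq:grad-XT} gives \eqref{eq:hess-rho}.

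The only delicate point is bookkeeping. The sign convention $A=d\nu$ in \eqref{eq:gauss-formula} has to be used consistently with $\nabla^{\R^{m+1}}_v\nu=A(v)$, and the self-adjointness of $A$ is what converts the normal-component identity into a gradient statement. Beyond this, no obstacle is expected.
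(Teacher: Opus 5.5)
Your proposal is correct and follows essentially the same route as the paper: differentiate $X=X^\top+K\nu$, take tangential parts using the Gauss formula, then read off $\nabla\rho=X^\top$ and its Hessian. The only cosmetic difference is that the paper gets \eqref{eq:grad-K} by differentiating $K=\inner{X}{\nu}$ directly. You instead extract it from the normal component of the same differentiated decomposition, which is the same identity.
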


\begin{proof}
Differentiating \eqref{eq:shrinker} gives
\[
    dK(v)=\inner{X}{d\nu(v)}
          =\inner{A(X^\top)}{v},
\]
which proves \eqref{eq:grad-K}.  Differentiating
\eqref{eq:X-decomposition} and taking tangential parts gives
\eqref{eq:grad-XT}.  Formula \eqref{eq:grad-rho} follows directly from
$d\rho(v)=\inner{X}{v}$.  Differentiating it and using
\eqref{eq:grad-XT} proves \eqref{eq:hess-rho}.
\end{proof}

We use these identities to determine the correct orientation at a farthest point
and single out a component on which the shape operator is positive definite.

\begin{lemma}
\label{lem:max-radius}
Choose $p_0\in M$ such that
\[
    R_0:=|X(p_0)|=\max_M|X|>0.
\]
The global unit normal may be chosen, without changing the shrinker
equation, so that
\begin{equation}\label{eq:normal-at-p0}
    \nu(p_0)=\frac{X(p_0)}{|X(p_0)|}.
\end{equation}
Let $U$ be the connected component of $\{K>0\}$
containing $p_0$.  Then
\begin{equation}\label{eq:positive-at-p0}
    K(p_0)=R_0>0,
    \qquad
    A_{p_0}\geq R_0^{-1}\id>0.
\end{equation}
Moreover, $A>0$ on $U$.
\end{lemma}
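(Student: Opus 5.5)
First, I fix the orientation. Since $p_0$ maximizes $\rho=\frac12|X|^2$, \eqref{eq:grad-rho} gives $X^\top(p_0)=0$. Hence $X(p_0)$ is normal and $X(p_0)=\pm R_0\,\nu(p_0)$, with $R_0>0$ because $X$ is an immersion of a closed manifold and cannot map everything to the origin. Replacing $\nu$ by $-\nu$ turns $A=d\nu$ into $-A$, so $K=\det A$ becomes $(-1)^mK$ while $\inner{X}{\nu}$ becomes $-\inner{X}{\nu}$. For even $m$ the shrinker equation is therefore \emph{not} invariant under this flip. So the first step is to use the equation itself to rule out the wrong sign rather than simply flip. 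Write $X(p_0)=\epsilon R_0\nu(p_0)$ with $\epsilon=\pm1$. Then \eqref{eq:X-decomposition} gives $K(p_0)=\epsilon R_0$. At a maximum the Hessian \eqref{eq:hess-rho} is nonpositive, so
\[
K(p_0)A_{p_0}\geq \id,
\]
that is, $\epsilon R_0A_{p_0}\geq\id$.

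If $\epsilon=-1$, then $A_{p_0}\leq -R_0^{-1}\id$. Hence $K(p_0)=\det A_{p_0}$ has sign $(-1)^m$, while the equation forces $K(p_0)=-R_0<0$. For $m$ even this is a contradiction, so $\epsilon=+1$ automatically. For $m$ odd, flipping $\nu$ preserves the shrinker equation, since both sides change sign, and it makes $\epsilon=+1$. In either case we obtain \eqref{eq:normal-at-p0} with the equation intact. Then $K(p_0)=R_0$, and $R_0A_{p_0}\geq\id$ gives \eqref{eq:positive-at-p0}.

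Second, I show $A>0$ on $U$ by a connectedness argument. Let $W=\{p\in U: A_p>0\}$. This set is open, since positive definiteness is an open condition and $A$ is continuous, and it contains $p_0$. For closedness in $U$, take $p\in U$ in the closure of $W$. By continuity $A_p\geq0$, while $\det A_p=K(p)>0$, so no eigenvalue of $A_p$ vanishes. Hence $A_p>0$, and $W$ is closed in $U$. Since $U$ is connected, $W=U$.

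The only delicate point is the orientation step: one has to notice that for even $m$ the normal cannot be flipped freely, and that the Hessian inequality together with the sign of $\det$ excludes the bad orientation. The rest is routine.
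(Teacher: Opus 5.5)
Your proposal is correct and follows essentially the same route as the paper: the Hessian inequality $K(p_0)A_{p_0}\geq\id$ at the farthest point, the determinant-sign contradiction excluding the wrong orientation for even $m$, the normal flip for odd $m$, and propagation of $A>0$ over $U$ using $\det A=K>0$. Your open--closed argument with $W=\{p\in U:A_p>0\}$ is just an explicit version of the paper's appeal to continuity of eigenvalues.
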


\begin{proof}
Since $p_0$ maximizes $\rho=\frac{1}{2}|X|^2$, one has
$X^\top(p_0)=0$.  Thus
\[
    X(p_0)=\lambda\nu(p_0),
    \qquad \lambda\in\{R_0,-R_0\}.
\]
The shrinker equation \eqref{eq:shrinker} gives $K(p_0)=\lambda$, while
\eqref{eq:hess-rho} and $\nabla^2\rho(p_0)\leq0$ give
\begin{equation}\label{eq:lambda-A}
    \lambda\inner{A_{p_0}v}{v}\geq |v|^2,
    \qquad v\in T_{p_0}M.
\end{equation}

Suppose first that $m$ is even.  If $\lambda<0$, then
\eqref{eq:lambda-A} implies $A_{p_0}<0$.  Since $m$ is even, this gives
$\det A_{p_0}>0$, contradicting
$\det A_{p_0}=K(p_0)=\lambda<0$.  Hence $\lambda=R_0$, so the normal
satisfying the shrinker equation already obeys \eqref{eq:normal-at-p0}.

If $m$ is odd and $\lambda<0$, replace $\nu$ by $-\nu$.  Under this
replacement
\[
    A\mapsto-A,
    \qquad
    K\mapsto(-1)^mK=-K,
    \qquad
    \inner{X}{\nu}\mapsto-\inner{X}{\nu},
\]
so the shrinker equation is preserved.  Thus in both cases we may assume
$X(p_0)=R_0\nu(p_0)$. The shrinker equation gives $K(p_0)=R_0$, and
\eqref{eq:lambda-A} yields
\[
    \inner{A_{p_0}v}{v}\geq R_0^{-1}|v|^2,
\]
which proves \eqref{eq:positive-at-p0}.  At every point of $U$,
\[
    \det A=K>0,
\]
so $A$ is a nondegenerate self-adjoint endomorphism.  By continuity of
the eigenvalues we have $A>0$ on $U$ as claimed.

\end{proof}

From now on, we use the normal and the component $U$
selected in Lemma~\ref{lem:max-radius}, and we retain the notation $\nu$,
$A$, and $K$. For the remainder of this paper, we assume for contradiction that
\begin{equation}\label{eq:U-neq-M}
    U\neq M.
\end{equation}
To study the geometry of $U$, we introduce the polar map
\begin{equation}\label{eq:Y-definition}
    Y:=\frac{\nu}{K}:U\longrightarrow\R^{m+1}.
\end{equation}
The next two lemmas show that $Y$ is a complete, locally strictly convex immersion.

\begin{lemma}\label{lem:Y-geometry}
Let $p\in U$ and $v,w\in T_pM$ be arbitrary. The map $Y$ is an immersion and
\begin{align}
    dY(v)&=\frac1K A(v)-\frac{dK(v)}{K^2}\nu,
       \label{eq:dY}\\
    \inner{X}{Y}&=1,\label{eq:XY-one}\\
    \inner{X}{dY(v)}&=0.\label{eq:X-dY-zero}
\end{align}
With the unit normal
\begin{equation}\label{eq:N-star}
    N^*:=\frac{X}{|X|},
\end{equation}
the second fundamental form of $Y$ is positive definite:
\begin{equation}\label{eq:Y-second-form}
    \inner{dY(v)}{dN^*(w)}
       =\frac{\inner{A(v)}{w}}{|X|K}.
\end{equation}
Thus $Y$ is locally strictly convex.
\end{lemma}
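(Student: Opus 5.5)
The plan is to compute everything directly from the definition $Y=\nu/K$ on $U$, where $K>0$ and $A>0$ by Lemma~\ref{lem:max-radius}.

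\emph{The formula for $dY$ and the immersion property.} Differentiating $Y=\nu/K$ along $v$ and using $d\nu=A$ gives \eqref{eq:dY} at once. The first term $\frac1K A(v)$ is tangent to $M$, and the second is normal. Since $A_p>0$ is invertible, the tangential part of $dY(v)$ vanishes only when $v=0$. Hence $dY_p$ is injective and $Y$ is an immersion.

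\emph{The identities \eqref{eq:XY-one} and \eqref{eq:X-dY-zero}.} By the shrinker equation \eqref{eq:shrinker}, $\inner{X}{Y}=\inner{X}{\nu}/K=1$. For the second identity there are two routes. One is to differentiate $\inner{X}{Y}=1$, which gives $\inner{v}{Y}+\inner{X}{dY(v)}=0$; since $Y\parallel\nu$ is normal to $v$, this yields \eqref{eq:X-dY-zero}. The other is a direct check with \eqref{eq:X-decomposition} and \eqref{eq:grad-K}:
\[
\inner{X}{dY(v)}=\frac{\inner{X^\top}{A(v)}}{K}-\frac{dK(v)}{K^2}K
=\frac{\inner{A(X^\top)}{v}-dK(v)}{K}=0 .
\]

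\emph{The normal $N^*$.} By \eqref{eq:X-dY-zero}, $X$ is orthogonal to the image of $dY_p$, which is $m$-dimensional. Also $X\neq0$, because $\inner{X}{\nu}=K>0$. So $N^*=X/|X|$ is a smooth unit normal field along $Y$ on $U$.

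\emph{The second fundamental form \eqref{eq:Y-second-form}.} Since $dX(w)=w$, we have $dN^*(w)=\frac{w}{|X|}-\frac{\inner{X}{w}}{|X|^3}X$. Pair this with $dY(v)$. The term containing $X$ drops out by \eqref{eq:X-dY-zero}. In the remaining term, $w$ is tangent, so only the tangential part $\frac1K A(v)$ of $dY(v)$ contributes. This gives $\frac{\inner{A(v)}{w}}{K|X|}$, which is symmetric and positive definite because $A>0$ and $K>0$ on $U$.

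This last sign check, together with the sign convention relating $dN^*$ to the second fundamental form, is the only place where care is needed. The sign of $\inner{dY}{dN^*}$ is definite, so with the appropriate orientation the second fundamental form of $Y$ is positive definite, and $Y$ is locally strictly convex.
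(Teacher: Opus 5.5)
Your proposal is correct and follows the paper's proof essentially step by step: differentiate $Y=\nu/K$, and get injectivity of $dY$ from the invertible tangential part $K^{-1}A(v)$. Then obtain $\inner{X}{dY(v)}=0$ by differentiating $\inner{X}{Y}=1$, and pair $dN^*(w)$ with $dY(v)$ so that the $X$-term drops out. Your alternative direct check of \eqref{eq:X-dY-zero} via \eqref{eq:grad-K} and the self-adjointness of $A$ is also valid, though it is not needed.
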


\begin{proof}
Formula \eqref{eq:dY} follows by direct differentiation.  Its component tangent
to the original immersion is $K^{-1}A(v)$, which vanishes only for $v=0$.
Hence $Y$ is an immersion.
The shrinker equation \eqref{eq:shrinker} gives \eqref{eq:XY-one}.  Differentiating it
and observing that $\inner{v}{Y}=0$ gives \eqref{eq:X-dY-zero}.
Since $K>0$ on $U$, $K=\inner{X}{\nu}$ implies $X\neq0$, so
$N^*$ is well defined.

For $w\in T_pM$,
\[
    dN^*(w)=\frac{w}{|X|}
      -\frac{X\inner{X}{w}}{|X|^3}.
\]
The second term is orthogonal to $dY(v)$ by
\eqref{eq:X-dY-zero}, while \eqref{eq:dY} and $w\perp\nu$ give
\[
    \inner{dY(v)}{w}=\frac1K\inner{A(v)}{w}.
\]
This proves \eqref{eq:Y-second-form}, which is positive definite.
\end{proof}

\begin{lemma}\label{lem:Y-proper}
The immersion $Y:U\to\R^{m+1}$ is proper and complete in its induced
metric.
\end{lemma}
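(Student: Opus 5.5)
Properness will be proved first; completeness then follows from a standard fact. Let $(p_k)\subset U$ leave every compact subset of $U$. Since $M$ is compact, we may pass to a subsequence with $p_k\to q\in\overline U$. Since $p_k$ eventually leaves each compact subset of $U$, we get $q\notin U$, so $q\in\partial U$. Because $U$ is a connected component of the open set $\{K>0\}$, it is open and closed in $\{K>0\}$. Hence a boundary point of $U$ cannot lie in $\{K>0\}$, and $K(q)=0$ by continuity. We also need $\partial U\neq\emptyset$. This holds because $U$ is open, nonempty and, by assumption \eqref{eq:U-neq-M}, not all of the connected manifold $M$; so $U$ is not closed. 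Then $K(p_k)\to0^+$, and since $|Y|=1/K$,
\[
    |Y(p_k)|=\frac{1}{K(p_k)}\longrightarrow\infty.
\]
Consequently, for every compact set $B\subset\R^{m+1}$ we have $\sup_B|y|<\infty$. The set $Y^{-1}(B)$ is contained in $\{p\in U: K(p)\geq(\sup_B|y|)^{-1}\}$, which is closed in $M$ (since on $\overline U$, $K\geq c>0$ forces the point into $U$), hence compact. Therefore $Y^{-1}(B)$, being closed in $U$ and contained in a compact subset of $U$, is compact, and $Y$ is proper.

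For completeness, I would use the standard fact that a proper immersion into a complete Riemannian manifold is complete in the pulled-back metric. Concretely, take a Cauchy sequence $(p_k)$ for the induced metric $Y^*g_{\mathrm{Euc}}$. Since $|Y(p)-Y(p')|\leq d_{Y^*g}(p,p')$, the images $Y(p_k)$ form a Cauchy, hence bounded, sequence in $\R^{m+1}$. Properness then places all $p_k$ in a compact set $Y^{-1}(\overline B_R)$. A subsequence converges to some $p\in U$, and since $Y$ is an immersion, the induced distance is compatible with the manifold topology, so the whole Cauchy sequence converges to $p$. Alternatively, Hopf--Rinow can be phrased by saying that every closed bounded set is compact: induced-metric bounded sets have bounded image and hence relatively compact preimage.

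The only delicate step is the topological one: showing that escaping sequences accumulate at points where $K=0$, which uses that $U$ is a full component of $\{K>0\}$. A smaller point is the claim that the closed set $\{K\geq c\}\cap\overline U$ lies inside $U$; this again uses that $\partial U\subset\{K=0\}$. Everything else is routine.
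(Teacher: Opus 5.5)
Your proof is correct and is essentially the paper's argument: $Y^{-1}$ of a compact set lies in $U\cap\{K\geq c\}$, which is compact because $\partial U\subset\{K=0\}$, and completeness then follows from properness together with $|Y(p)-Y(p')|\leq d_{Y^*g_{\mathrm{Euc}}}(p,p')$. The preliminary discussion of escaping sequences and of $\partial U\neq\varnothing$ is not needed, and you should take $c=R^{-1}$ with $B\subset\overline B_R(0)$ for some $R>0$, so that you never divide by $\sup_B|y|=0$.
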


\begin{proof}
Since $|\nu|=1$ and $K>0$,
\begin{equation}\label{eq:Y-norm}
    |Y|=\frac1K.
\end{equation}
Let $Q_0\subset\R^{m+1}$ be compact and choose $R>0$ such that
$Q_0\subset\overline B_R(0)$.  Then
\[
    Y^{-1}(Q_0)\subset U\cap\{K\geq R^{-1}\}.
\]
The set on the right hand side is compact in $U$. Since $Y^{-1}(Q_0)$ is closed, it is
compact.  Thus $Y$ is proper.

If $\{p_j\}$ is Cauchy for the metric induced by $Y$, then
\[
    |Y(p_i)-Y(p_j)|
       \leq d_{Y^*g_{\mathrm{Euc}}}(p_i,p_j).
\]
Consequently $\{Y(p_j)\}$ lies in a compact Euclidean ball.  Properness gives a
convergent subsequence of $\{p_j\}$ in $U$, and the Cauchy property then gives
convergence of the full sequence.  Hence the induced metric is complete.
\end{proof}

The preceding lemmas show that $Y$ is complete and locally strictly convex.
Van Heijenoort's theorem then identifies $Y(U)$ with the boundary of a
closed convex set.

\begin{proposition}\label{prop:C-star}
There is a closed convex set $C^*\subset\R^{m+1}$ with nonempty interior
such that
\[
    Y:U\longrightarrow\partial C^*
\]
is a diffeomorphism.  The vector $N^*=X/|X|$ is the outward unit normal, and
\begin{equation}\label{eq:C-star-halfspaces}
    C^*=\bigcap_{p\in U}
      \{z\in\R^{m+1}:\inner{z}{X(p)}\leq1\}.
\end{equation}
If $R_0=\max_M|X|>0$, then
\begin{equation}\label{eq:ball-in-C-star}
    \overline B_{1/R_0}(0)\subset C^*.
\end{equation}
Moreover, $C^*$ is unbounded.
\end{proposition}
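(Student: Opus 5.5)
We would first apply Theorem~\ref{thm:van-heijenoort} to $F=Y:U\to\R^{m+1}$. The component $U$ is a connected open subset of $M$, hence a manifold without boundary. By Lemma~\ref{lem:Y-proper}, $Y$ is complete, and by Lemma~\ref{lem:Y-geometry} it is locally strictly convex at every point. The theorem therefore gives a closed convex set $C^*$ with nonempty interior such that $Y$ is an embedding onto $\partial C^*$. Since $Y$ is a proper injective immersion onto $\partial C^*$, it is a diffeomorphism onto this hypersurface.

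Next we identify the normal and the half-space description. Equations \eqref{eq:XY-one} and \eqref{eq:X-dY-zero} show that at $Y(p)$ the tangent hyperplane of $\partial C^*$ is $\{z:\inner{z}{X(p)}=1\}$. Hence the unit normal is $\pm N^*$. For a closed convex set with nonempty interior, every boundary point carries a supporting hyperplane, and at a smooth boundary point that hyperplane is the tangent one. So $C^*$ lies on one side of $\{\inner{z}{X(p)}=1\}$.

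To pick the side, we use the sign of the second fundamental form. With respect to $N^*$ it is positive by \eqref{eq:Y-second-form}, which means the surface curves away from $N^*$. Thus $C^*$ lies in $\{\inner{z}{N^*}\le\inner{Y}{N^*}\}$, that is, in $\{\inner{z}{X(p)}\le1\}$, and $N^*$ is outward. A closed convex set equals the intersection of its supporting half-spaces at boundary points. Since every boundary point is $Y(p)$ for some $p\in U$, this gives \eqref{eq:C-star-halfspaces}.

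The inclusion \eqref{eq:ball-in-C-star} then follows directly. If $|z|\le1/R_0$, then $\inner{z}{X(p)}\le|z|\,|X(p)|\le1$ for every $p$.

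The main obstacle is unboundedness, and this is where the hypothesis \eqref{eq:U-neq-M} enters. Suppose $C^*$ were bounded. Then $\partial C^*$ would be compact, and $U\cong\partial C^*$ would be compact. Hence $U$ would be closed in $M$. It is also open in $M$ and nonempty, so connectedness of $M$ would give $U=M$, contradicting \eqref{eq:U-neq-M}.

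A more concrete route is also available. Since $U\ne M$ and $M$ is connected, $U$ has a boundary point $q$, where $K(q)=0$. Taking $p_j\to q$ in $U$ gives $|Y(p_j)|=1/K(p_j)\to\infty$ by \eqref{eq:Y-norm}, so $\partial C^*$ is unbounded. Either argument suffices; the second avoids any topological subtlety.
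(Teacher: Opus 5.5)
Your proposal is correct and follows the paper's proof essentially step by step: Van Heijenoort's theorem applied via Lemmas~\ref{lem:Y-geometry} and~\ref{lem:Y-proper}, the tangent hyperplane $\{\inner{z}{X(p)}=1\}$ with the side fixed by the positivity in \eqref{eq:Y-second-form}, the Cauchy--Schwarz bound for the ball, and noncompactness of $U$ (from connectedness of $M$) for unboundedness. Your alternative unboundedness argument is also valid, provided you note that $K=0$ on $\overline U\setminus U$: a point there with $K>0$ would lie in $U$, since $U$ is a component of the open set $\{K>0\}$, and so $|Y|=1/K$ blows up near the frontier of $U$.
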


\begin{proof}
Lemmas~\ref{lem:Y-geometry} and~\ref{lem:Y-proper} allow us to apply
Theorem~\ref{thm:van-heijenoort}.  It gives a closed convex set $C^*$ with
nonempty interior whose entire boundary is $Y(U)$.  Since $Y$ is a smooth
embedding and a local diffeomorphism onto its image, it is a diffeomorphism
onto $\partial C^*$.

At $Y(p)$, equations \eqref{eq:XY-one} and \eqref{eq:X-dY-zero} show that
the tangent hyperplane is
\[
    \{z\in\R^{m+1}:\inner{z}{X(p)}=1\}.
\]
The positivity in \eqref{eq:Y-second-form} and the
outward normal $N^*$ place the convex side in
$\{z\in\R^{m+1}:\inner{z}{X(p)}\leq1\}$, which proves
\eqref{eq:C-star-halfspaces}.

If $|z|\leq1/R_0$, then for every $p\in U$,
\[
    \inner{z}{X(p)}\leq |z|\,|X(p)|\leq1.
\]
This proves \eqref{eq:ball-in-C-star}. In particular,
$0\in\Int C^*$.

Finally, $U$ is noncompact.  Otherwise it would be a compact, hence 
closed subset of $M$.  Since $U$ is also open and $M$ is connected, 
this implies $U=M$, contrary to our assumption.
Therefore $\partial C^*=Y(U)$ is noncompact, so $C^*$ is unbounded.
\end{proof}

We next consider the recession cone of $C^*$. Since the convex set $C^*$ is 
unbounded, its recession cone describes the directions in which it extends 
to infinity. Set
\begin{equation}\label{eq:R-Gamma}
    \cR:=\rec(C^*),
    \qquad
    \Gamma:=\cR\cap\Sph^m.
\end{equation}

\begin{lemma}\label{lem:recession-facts}
The set $\cR$ is a closed convex cone, satisfying:
\begin{enumerate}[label=\textup{(\roman*)}]
\item if $z_0\in C^*$, then
\begin{equation}\label{eq:one-ray-test}
    z_0+tv\in C^*\ \text{for every }t\geq0
    \quad\Longleftrightarrow\quad
    v\in\cR;
\end{equation}
\item the cone $\cR$ contains a nonzero vector, and hence
$\Gamma\neq\varnothing$;
\item the cone $\cR$ is pointed.
\end{enumerate}
\end{lemma}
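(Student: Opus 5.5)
We start with the basic properties of $\cR=\rec(C^*)$, all of which follow from closedness and convexity of $C^*$. It is a cone by definition: scaling $v$ by $s\geq0$ only reparametrizes the rays. It is closed, since $\cR=\bigcap_{z\in C^*,\,t\geq0}\{v: z+tv\in C^*\}$ and each set in the intersection is closed. For convexity, take $v_1,v_2\in\cR$ and $z\in C^*$. Then $z+t(sv_1+(1-s)v_2)=s(z+tv_1)+(1-s)(z+tv_2)\in C^*$.

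For (i), the direction $\Leftarrow$ is the definition. For $\Rightarrow$, suppose $z_0+tv\in C^*$ for all $t\geq0$, and let $z\in C^*$ and $t\geq0$. For $\lambda\in(0,1]$ the point
\[
    (1-\lambda)z+\lambda\bigl(z_0+\tfrac{t}{\lambda}v\bigr)=(1-\lambda)z+\lambda z_0+tv
\]
lies in $C^*$ by convexity. As $\lambda\to0^+$ these points converge to $z+tv$, which therefore lies in $C^*$ by closedness.

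For (ii), take $z_0=0\in C^*$, using \eqref{eq:ball-in-C-star}. Since $C^*$ is unbounded by Proposition~\ref{prop:C-star}, there are $z_j\in C^*$ with $|z_j|\to\infty$. Pass to a subsequence so that $v_j=z_j/|z_j|\to v\in\Sph^m$. Fix $t\geq0$. For $j$ large we have $t\leq|z_j|$, so $tv_j=(t/|z_j|)z_j+(1-t/|z_j|)\cdot0$ lies in $C^*$ by convexity. Letting $j\to\infty$ and using closedness gives $tv\in C^*$ for every $t\geq0$. By (i) this means $v\in\cR$, so $\Gamma\neq\varnothing$.

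For (iii), suppose $v\neq0$ and $\pm v\in\cR$. Then for every $z\in C^*$ the entire line $z+\R v$ lies in $C^*$. In particular, for any $p\in U$, the line through $Y(p)\in\partial C^*$ in direction $v$ lies in $C^*$. By \eqref{eq:C-star-halfspaces} it therefore lies in the half-space $\{\inner{z}{X(p)}\leq1\}$. A line can stay in a half-space only if it is parallel to the boundary hyperplane, so $\inner{v}{X(p)}=0$. Then $Y(p)+tv$ lies in the supporting hyperplane $\{\inner{z}{X(p)}=1\}$ for all $t$, and also lies in $C^*$, hence lies in $\partial C^*$.

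Since $Y:U\to\partial C^*$ is a diffeomorphism, the curve $Y(p)+tv$ equals $Y(\gamma(t))$ for some smooth curve $\gamma$ in $U$. Along this curve $Y$ is a straight line, so the second fundamental form of $Y$ vanishes in the direction $\gamma'(0)\neq0$. This contradicts the positivity in \eqref{eq:Y-second-form}. Hence $\cR\cap(-\cR)=\{0\}$.

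The main obstacle is the final step of (iii): checking that a straight segment inside $\partial C^*$ really produces a null direction of the second fundamental form. This holds because the normal $N^*$ is constant along the segment (it is the normal of the fixed supporting hyperplane), so $dN^*(\gamma')=0$, and \eqref{eq:Y-second-form} then forces $A(\gamma')=0$. That is impossible, since $A>0$ on $U$ by Lemma~\ref{lem:max-radius}.
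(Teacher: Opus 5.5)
Your proof is correct and takes essentially the same route as the paper. The paper cites Rockafellar (Theorems 8.1--8.4) for the cone properties and parts (i) and (ii), which you instead verify directly with standard convexity-and-closedness arguments. For (iii), both arguments show that $\pm v\in\cR$ forces a complete line of $\partial C^*$ through every boundary point, contradicting the positivity in \eqref{eq:Y-second-form}; you reach the boundary line via the supporting half-space, where the paper uses $C^*+tv=C^*$, and you also spell out why a straight boundary line gives a null direction.
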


\begin{proof}
The general results of Rockafellar~\cite[Theorems~8.1--8.4]{Rockafellar1970}
show that $\cR$ is a closed convex cone and establish part~\textup{(i)}.
Since $C^*$ is unbounded, they also imply that $\cR$ contains a nonzero
vector, and hence $\Gamma\neq\varnothing$. This proves part~\textup{(ii)}.

Finally, suppose that $0\neq v\in\cR\cap(-\cR)$.  Then
\[
    C^*+tv=C^*
    \qquad\text{for every }t\in\R.
\]
So every boundary point lies on a complete boundary line in direction $v$, 
contradicting the positive definiteness of the second fundamental form in
\eqref{eq:Y-second-form}.  Therefore $\cR$ is pointed.
\end{proof}

Since $C^*$ contains the origin in its interior, radial projection is defined
on $\partial C^*$. The next proposition shows that it is a diffeomorphism onto
$\Sph^m\setminus\Gamma$.

\begin{proposition}
\label{prop:radial}
The radial projection
\begin{equation}\label{eq:radial-map}
    \pi:\partial C^*\longrightarrow\Sph^m\setminus\Gamma,
    \qquad
    \pi(y):=\frac{y}{|y|},
\end{equation}
is a diffeomorphism.  Consequently,
\begin{equation}\label{eq:nu-diffeomorphism}
    \nu:U\longrightarrow D:=\Sph^m\setminus\Gamma
\end{equation}
is a diffeomorphism.
\end{proposition}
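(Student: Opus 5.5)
Because $0\in\Int C^*$ by \eqref{eq:ball-in-C-star}, every ray $\{tu:t\geq0\}$ with $u\in\Sph^m$ starts in the interior of the closed convex set $C^*$. I will first show that such a ray meets $\partial C^*$ exactly once when $u\notin\Gamma$, and never when $u\in\Gamma$. By Lemma~\ref{lem:recession-facts}(i) with $z_0=0$, the ray lies entirely in $C^*$ if and only if $u\in\cR$, that is, $u\in\Gamma$. For $u\notin\Gamma$, let $t_u=\sup\{t\geq0:tu\in C^*\}$. Then $t_u\in(0,\infty)$ and $t_uu\in\partial C^*$. Uniqueness follows from the standard fact that for $0\le t<t_u$ the point $tu$ lies in the interior: it sits in the convex hull of a small ball around $0$ and the point $t_uu$. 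For $u\in\Gamma$, every point $tu$ lies in the interior by the same convex-hull argument applied to $su$ with $s>t$, so the ray misses $\partial C^*$. Also $\partial C^*$ does not contain $0$. Together these show that $\pi$ is a well-defined bijection from $\partial C^*$ onto $\Sph^m\setminus\Gamma$.

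Next I will show that $\pi$ is a local diffeomorphism, which together with bijectivity gives a diffeomorphism. Here $\partial C^*=Y(U)$ is a smooth embedded hypersurface by Proposition~\ref{prop:C-star}, and $\pi$ is the restriction of a smooth map on $\R^{m+1}\setminus\{0\}$. Its differential at $y$ is injective on $T_y\partial C^*$ precisely when the radial direction $y$ is transversal to the tangent hyperplane. At $y=Y(p)$ the tangent hyperplane is $\{z:\inner{z}{X(p)}=1\}$, and \eqref{eq:XY-one} gives $\inner{y}{X(p)}=1\neq0$. So $y$ is not tangent, $d\pi_y$ is an isomorphism, and $\pi$ is a diffeomorphism onto its image $\Sph^m\setminus\Gamma$. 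That image is open because $\Gamma$ is closed, consistent with the above.

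For the consequence, note that $\pi\circ Y=\nu/|\nu|=\nu$ on $U$, since $Y=\nu/K$ and $K>0$. Since $Y:U\to\partial C^*$ is a diffeomorphism by Proposition~\ref{prop:C-star}, the composition $\nu=\pi\circ Y:U\to D$ is a diffeomorphism.

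The main obstacle is the bijectivity step, namely the interior-segment argument and the characterization of rays lying inside $C^*$ via the recession cone. Both are standard convexity facts, and I would cite Rockafellar for them; the remaining steps are direct.
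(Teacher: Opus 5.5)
Your proof is correct and follows the paper's argument essentially step for step. It establishes bijectivity through the recession-cone characterization of rays in $C^*$ and the interior-segment fact, obtains the local diffeomorphism from $\inner{y}{X(p)}=1\neq0$ (so the radial direction is transverse to $T_y\partial C^*$), and then concludes via $\nu=\pi\circ Y$. The only cosmetic differences are that you handle $u\in\Gamma$ with the same convex-hull argument rather than translating $B_r(0)$ along $\cR$, and that transversality should be stated for the linear tangent space $T_y\partial C^*=X(p)^\perp$ rather than the affine hyperplane $\{z:\inner{z}{X(p)}=1\}$.
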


\begin{proof}
Choose $r>0$ such that $B_r(0)\subset C^*$, and define
\[
    \rho_*(n):=\sup\{s\geq0:sn\in C^*\}\in(0,\infty],
    \qquad n\in\Sph^m.
\]
By Lemma~\ref{lem:recession-facts}(i), $\rho_*(n)=\infty$ exactly when
$n\in\Gamma$.  If $n\notin\Gamma$, then $\rho_*(n)n$ is the unique
intersection of the ray $\{sn:s\geq0\}$ with $\partial C^*$.  Indeed, if
$0\leq s<\rho_*(n)$, choose $s'$ with $s<s'<\rho_*(n)$.  The convex hull
of $B_r(0)$ and $s'n$ contains a neighborhood of $sn$, so $sn$ is an
interior point.  Points with $s>\rho_*(n)$ are outside $C^*$.

If $n\in\Gamma$, Lemma~\ref{lem:recession-facts}(i) gives
\[
    B_r(tn)=B_r(0)+tn\subset C^*,
    \qquad\text{for }t\geq0.
\]
Thus the entire ray is contained in the interior and does not meet the
boundary.  Hence $\pi$ is a bijection from $\partial C^*$ to
$\Sph^m\setminus\Gamma$.

For $y\neq0$, put $\widehat y=y/|y|$.  Then
\begin{equation}\label{eq:d-radial}
    d\pi_y(\xi)
      =\frac1{|y|}\left(
          \xi-\inner{\xi}{\widehat y}\widehat y
        \right).
\end{equation}
Thus the kernel of $d\pi_y$ is the line $\R y$.  If $y=Y(p)$,
then
\[
    T_y\partial C^*
       =\{\xi\in\R^{m+1}:\inner{\xi}{X(p)}=0\},
    \qquad
    \inner{y}{X(p)}=1.
\]
No nonzero vector parallel to $y$ is tangent to $\partial C^*$.
Therefore $\pi$ is a local diffeomorphism, thus a diffeomorphism.

Finally, since $K>0$ on $U$,
\[
    \pi(Y(p))
      =\frac{\nu(p)/K(p)}{|\nu(p)|/K(p)}
      =\nu(p).
\]
Together with the fact that $Y$ is a diffeomorphism, this proves
\eqref{eq:nu-diffeomorphism}.
\end{proof}

Thus the Gauss map parametrizes $U$ by the complement of the recession
directions.  We denote the polar set of $C^*$ by $C$ and study its 
support function.

\begin{lemma}\label{lem:C-body}
The set $C$ is a polar body containing the origin, and
\begin{equation}\label{eq:C-bounded}
    C\subset\overline B_{R_0}(0).
\end{equation}
\end{lemma}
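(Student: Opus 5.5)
We need to show that $C=(C^*)^\circ$ is a convex body, meaning compact and convex with nonempty interior, that it contains the origin, and that $C\subset\overline B_{R_0}(0)$. The plan is to read off all of this from the half-space description \eqref{eq:C-star-halfspaces} together with the inclusion \eqref{eq:ball-in-C-star} from Proposition~\ref{prop:C-star}.

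\emph{Closed, convex, and containing the origin.} By Definition~\ref{def:polar-set}, $C$ is an intersection of closed half-spaces $\{x:\inner{x}{z}\leq1\}$ indexed by $z\in C^*$. Hence $C$ is closed and convex, and $0\in C$ trivially. Note that $C^*$ is closed, convex, and contains $0$, so the polar is well defined.

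\emph{Boundedness.} By \eqref{eq:ball-in-C-star}, $\overline B_{1/R_0}(0)\subset C^*$. Take $x\in C$ with $x\neq0$, and test against $z=x/(R_0|x|)\in C^*$. This gives $|x|/R_0\leq1$, so $|x|\leq R_0$, which is \eqref{eq:C-bounded}. Thus $C$ is compact.

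\emph{Nonempty interior.} For every $p\in U$, \eqref{eq:C-star-halfspaces} gives $C^*\subset\{z:\inner{z}{X(p)}\leq1\}$, so $X(p)\in C$. This alone does not yield interior points, so instead we use that $0\in\Int C^*$ and $C^*$ is closed. By the bipolar theorem, $C^{\circ}=C^*$. If $C$ had empty interior, it would lie in a hyperplane $u^\perp$ through $0$. Then $C^\circ$ would contain the whole line $\R u$, so $\pm u\in\rec(C^*)$, contradicting Lemma~\ref{lem:recession-facts}(iii), which says $\cR$ is pointed.

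A more direct argument is also available. The points $X(p)$, $p\in U$, lie in $C$. If they all lay in a hyperplane $\{x:\inner{x}{u}=c\}$, then, since $\inner{X}{Y}=1$, the nonvanishing Gauss map of $Y$ would be degenerate. But by Lemma~\ref{lem:Y-geometry}, $N^*=X/|X|$ has injective differential, since $Y$ has positive definite second fundamental form. So $X|_U$ has rank $m$, and its image cannot lie in a hyperplane through the origin. Combined with $0\in C$, this gives $m+2$ affinely independent points in $C$. The bipolar route is cleaner, so I would use it.

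The only point needing care is this interior step, namely invoking the bipolar theorem for a closed convex set containing $0$ and translating "lies in a hyperplane" into a line in the recession cone. The containment and boundedness steps are immediate.
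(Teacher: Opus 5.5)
Your proof is correct and follows the paper's argument essentially step for step. Closedness, convexity and $0\in C$ come from the definition, the bound $|x|\le R_0$ comes from testing against $x/(R_0|x|)\in\overline B_{1/R_0}(0)\subset C^*$, and nonempty interior comes from the bipolar identity $C^\circ=C^*$: if $C\subset u^\perp$, then $\pm u\in\cR$, contradicting pointedness.

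Your optional ``direct'' aside is loosely worded but does not affect the proof, since you use the bipolar route. Rank $m$ of $X|_U$ is not what rules out $X(U)\subset u^\perp$. The reason is that $N^*$ would then take values in an $(m-1)$-sphere, contradicting the injectivity of $dN^*$.
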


\begin{proof}
The definition makes $C$ closed and convex and gives $0\in C$.  If
$x\in C\setminus\{0\}$, then
\[
    z=\frac{x}{R_0|x|}\in\overline B_{1/R_0}(0)\subset C^*.
\]
The polar inequality yields
\[
    \frac{|x|}{R_0}=\inner{x}{z}\leq1,
\]
which proves \eqref{eq:C-bounded}.  Hence $C$ is compact.

It remains to prove that $C$ has nonempty interior.  Suppose otherwise.
Since $0\in C$, its affine hull is a proper subspace of $\R^{m+1}$.  
Choose $0\neq v$ orthogonal to this subspace.  The bipolar
theorem gives
\[
    C^*=C^\circ,
\]
because $C^*$ is closed, convex, and contains the origin.  If $z\in C^*$,
$t\in\R$, and $x\in C$, then
\[
    \inner{x}{z+tv}=\inner{x}{z}\leq1.
\]
Thus $z+tv\in C^\circ=C^*$ for every $t\in\R$, and therefore
$v,-v\in\cR$.  This contradicts pointedness from
Lemma~\ref{lem:recession-facts}(iii).  Hence $C$ has nonempty interior.
\end{proof}

Since the polar body $C$ contains the origin, its support function 
denoted by $h:=h_C$ must be nonnegative.  
The next proposition describes the value of $h$ on $D$ and $\Gamma$.

\begin{proposition}\label{prop:h-identification}
For $D=\Sph^m\setminus\Gamma$, write $p(n):=\nu^{-1}(n)$.  Then
\begin{align}
    h(n)&=K(p(n))>0, &&n\in D,\label{eq:h-on-D}\\
    h(n)&=0, &&n\in\Gamma.\label{eq:h-on-Gamma}
\end{align}
In particular,
\begin{equation}\label{eq:h-zero-set}
    h^{-1}(0)=\Gamma.
\end{equation}
\end{proposition}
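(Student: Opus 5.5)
The plan is to use the bipolar identity $C^\circ=C^*$ from the proof of Lemma~\ref{lem:C-body}, together with the half-space description \eqref{eq:C-star-halfspaces}. Equivalently, $C$ is the closed convex hull of $\{0\}\cup X(U)$. Indeed, \eqref{eq:C-star-halfspaces} exhibits $C^*$ as the polar of the set $X(U)$, so by the bipolar theorem
\[
C=(C^*)^\circ=\overline{\operatorname{conv}}\bigl(\{0\}\cup X(U)\bigr).
\]
Consequently
\[
h(n)=\max\Bigl\{0,\ \sup_{p\in U}\inner{X(p)}{n}\Bigr\},
\]
and the whole task is to evaluate this supremum.

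For $n\in D$, let $p=p(n)$ and $y=Y(p)$. Then $y$ lies on $\partial C^*$, and the outward normal there is $X(p)/|X(p)|$. The point $X(p)$ lies in $C$, since for $z\in C^*$ we have $\inner{X(p)}{z}\le1$. Moreover, for any $x\in C$ and any $z\in C^*$ we have $\inner{x}{z}\le1$, while $\inner{X(p)}{y}=1$ by \eqref{eq:XY-one}.

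To identify the maximizer of $\inner{\cdot}{n}$ over $C$, I would write $y=n/K(p)$ with $n=\nu(p)$. Then for $x\in C$,
\[
\inner{x}{n}=K(p)\inner{x}{y}\le K(p),
\]
since $y\in C^*$. Equality holds for $x=X(p)$, because
\[
\inner{X(p)}{n}=\inner{X(p)}{\nu(p)}=K(p)
\]
by the shrinker equation. Hence $h(n)=K(p(n))>0$, which gives \eqref{eq:h-on-D}. This direct argument avoids any subtlety about suprema.

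For $n\in\Gamma$, the statement $n\in\rec(C^*)$ together with $0\in C^*$ gives $tn\in C^*$ for all $t\ge0$. So for $x\in C$ we get $t\inner{x}{n}\le1$ for all $t>0$, hence $\inner{x}{n}\le0$ and $h(n)\le0$. Since $0\in C$, $h\ge0$, and therefore $h(n)=0$, which is \eqref{eq:h-on-Gamma}. Then \eqref{eq:h-zero-set} follows because $D$ and $\Gamma$ partition $\Sph^m$ and $h>0$ on $D$.

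The only potentially delicate point is the $D$ case: one must confirm that $Y(p(n))$ really equals $n/K$ and belongs to $C^*$. Both facts are immediate from the definition \eqref{eq:Y-definition} and Proposition~\ref{prop:C-star}. So I expect no real obstacle beyond invoking the bipolar identity $C^\circ=C^*$ correctly.
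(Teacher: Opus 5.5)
Your proof is correct and follows the paper's argument essentially verbatim: the upper bound $h(n)\le K(p)$ comes from $Y(p)=n/K(p)\in C^*$, equality is attained at $X(p)\in C$ via the half-space description \eqref{eq:C-star-halfspaces} and the shrinker equation, and the recession-ray argument together with $0\in C$ gives $h=0$ on $\Gamma$. The bipolar description $C=\overline{\operatorname{conv}}(\{0\}\cup X(U))$ in your first paragraph is correct but not needed.
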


\begin{proof}
Fix $n\in D$ and set $p=p(n)$.  Since
\[
    Y(p)=\frac{n}{K(p)}\in C^*,
\]
every $x\in C$ satisfies
\[
    \inner{x}{\frac{n}{K(p)}}\leq1.
\]
Therefore $h(n)\leq K(p)$.  Conversely,
\eqref{eq:C-star-halfspaces} gives
$\inner{X(p)}{z}\leq1$ for every $z\in C^*$, so $X(p)\in C$.  Hence
\[
    h(n)\geq\inner{X(p)}{n}
      =\inner{X(p)}{\nu(p)}=K(p).
\]
This proves \eqref{eq:h-on-D}.

If $n\in\Gamma$, then $tn\in C^*$ for every $t\geq0$.  Consequently,
for every $x\in C$ and $t>0$,
\[
    t\inner{x}{n}\leq1.
\]
Letting $t\to\infty$ gives $\inner{x}{n}\leq0$.  Since $0\in C$, its
support function is nonnegative, and therefore $h(n)=0$.
\end{proof}

By \eqref{eq:h-on-D} we know that $h$ is smooth on $D$. The inverse Gauss 
parametrization and the curvature radius tensor used here are standard.  
We give a brief proof since the identification $h=K\circ\nu^{-1}$ is 
specific to the shrinker equation.

\begin{proposition}\label{prop:local-support-identities}
Let $p=\nu^{-1}:D\to U$ and $\widetilde X:=X\circ p$.
For $n\in D$, we identify $T_{p(n)}M$ with $T_n\Sph^m$ via
$dX_{p(n)}$, and $\sigma A^{-1}$ denotes the bilinear form
\[
    (\sigma A^{-1})(\zeta,\eta)
      =\sigma(A_{p(n)}^{-1}\zeta,\eta),
      \qquad\text{for }\zeta,\eta\in T_n\Sph^m.
\]
Then
\begin{align}
    \widetilde X&=\bar\nabla h+hn,\label{eq:X-support}\\
    \bar\nabla^2h+h\sigma&=\sigma A^{-1}>0,
       \label{eq:radii-A-inverse}\\
    \det_\sigma(\bar\nabla^2h+h\sigma)&=\frac1h.
       \label{eq:curvature-radius-determinant}
\end{align}
Moreover,
\begin{equation}\label{eq:finite-area-identity}
    \int_D\frac1h\,d\omega
      =\Vol_g(U)<\infty.
\end{equation}
\end{proposition}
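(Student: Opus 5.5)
The plan is to use the standard inverse Gauss map argument on $D$. By Proposition~\ref{prop:h-identification} we already have $h(n)=K(p(n))=\inner{\widetilde X(n)}{n}$ for $n\in D$, and $h$ is smooth there. First we prove \eqref{eq:X-support}. Differentiate $h=\inner{\widetilde X}{n}$ along $\zeta\in T_n\Sph^m$ to get $dh(\zeta)=\inner{d\widetilde X(\zeta)}{n}+\inner{\widetilde X}{\zeta}$. The first term vanishes because $d\widetilde X(\zeta)=dX(dp(\zeta))$ is tangent to $X$ at $p(n)$, and $n=\nu(p(n))$ is normal there. Hence $\bar\nabla h$ is the component of $\widetilde X$ tangent to the sphere, and adding the normal component $hn$ gives $\widetilde X=\bar\nabla h+hn$. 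This step only uses that $\nu:U\to D$ is a diffeomorphism (Proposition~\ref{prop:radial}).

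Next we prove \eqref{eq:radii-A-inverse}. Differentiate \eqref{eq:X-support} along $\zeta$ and take the component tangent to $\Sph^m$. The standard computation, using the Gauss formula of the unit sphere $D_\zeta(\bar\nabla h)=\bar\nabla_\zeta\bar\nabla h-\inner{\bar\nabla h}{\zeta}n$, gives
\[
    d\widetilde X(\zeta)=\bar\nabla_\zeta\bar\nabla h+h\zeta+\bigl(dh(\zeta)-\inner{\bar\nabla h}{\zeta}\bigr)n=\bar\nabla_\zeta\bar\nabla h+h\zeta .
\]
On the other hand, $d\nu\circ dp=\id$, so $dp(\zeta)=A^{-1}\zeta$ under the identification $T_{p(n)}M\cong T_n\Sph^m$ (both equal $n^\perp$). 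Thus $d\widetilde X(\zeta)=A_{p(n)}^{-1}\zeta$. Pairing with $\eta$ yields $\bar\nabla^2h+h\sigma=\sigma A^{-1}$. This form is positive definite because $A>0$ on $U$ by Lemma~\ref{lem:max-radius}.

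For \eqref{eq:curvature-radius-determinant}, take $\det_\sigma$ of \eqref{eq:radii-A-inverse} to get $\det(A^{-1})=1/K(p(n))=1/h(n)$, using \eqref{eq:h-on-D}.

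Finally, for \eqref{eq:finite-area-identity}, change variables by the diffeomorphism $\nu:U\to D$. Its Jacobian with respect to $d\mu$ and $d\omega$ is $\det A=K$, so
\[
    \int_D\frac1h\,d\omega=\int_U\frac{1}{K}\,K\,d\mu=\Vol_g(U).
\]
This is finite because $U$ is an open subset of the closed manifold $M$.

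The only delicate point I expect is the identification of tangent spaces in the second step. One must check that $dX_{p(n)}(T_{p(n)}M)=n^\perp=T_n\Sph^m$ and that, under it, $d\nu=A$ so that $dp=A^{-1}$, keeping sign conventions consistent with $A=d\nu$. Everything else is a routine computation.
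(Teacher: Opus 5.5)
Your proposal is correct and follows the paper's proof essentially step by step: the inverse Gauss map with $dp=A^{-1}$, differentiating \eqref{eq:X-support} to obtain $\sigma A^{-1}$, the identity $\det A=K=h$, and the change of variables with Jacobian $K$. The only minor variation is in proving \eqref{eq:X-support}. You differentiate $h=\inner{\widetilde X}{n}$ directly and use that $d\widetilde X$ is tangent, whereas the paper computes $dh$ from $\nabla K=A(X^\top)$ and $dp=A^{-1}$; both amount to the same computation.
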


\begin{proof}
Fix $n\in D$ and write $q=p(n)$. Under the identification above,
for any $\zeta\in T_n\Sph^m$,
\[
    d\widetilde X_n(\zeta)=A_q^{-1}\zeta.
\]
Using $h=K\circ p$ and \eqref{eq:grad-K}, we have
\[
    dh_n(\zeta)
      =\inner{A_qX(q)^\top}{A_q^{-1}\zeta}
      =\inner{X(q)^\top}{\zeta}.
\]
Thus $\bar\nabla h(n)=X(q)^\top$. Since
$h(n)=K(q)=\inner{X(q)}{n}$, decomposing $X(q)$ into its
tangential and normal parts proves \eqref{eq:X-support}.

Differentiating \eqref{eq:X-support}, for any
$\zeta,\eta\in T_n\Sph^m$ we obtain
\[
    (\bar\nabla^2h+h\sigma)(\zeta,\eta)
      =\inner{d\widetilde X_n(\zeta)}{\eta}
      =\sigma(A_q^{-1}\zeta,\eta).
\]
This proves \eqref{eq:radii-A-inverse} and it is positive
definite because $A_q>0$. Taking determinants and using
$\det A_q=K(q)=h(n)$ yields
\[
    \det_\sigma(\bar\nabla^2h+h\sigma)
       =\det(A_q^{-1})
       =\frac1{h(n)},
\]
which is \eqref{eq:curvature-radius-determinant}.

Finally, the Jacobian of the diffeomorphism $\nu:U\to D$ is
$\det A=K$. Therefore
\[
    \int_D\frac1h\,d\omega
      =\int_U\frac{K}{h\circ\nu}\,d\mu
      =\Vol_g(U)<\infty,
\]
because $h\circ\nu=K$ and $U\subset M$.
\end{proof}

\section{The global measure identity on the polar body \texorpdfstring{$C$}{C}}
\label{sec:global-measure-identity}

We now extend the local surface area formula on $D$ to the global identity
$h\,dS_C=d\omega$ on $\Sph^m$, which will be used to rule out $U\neq M$.
It remains to show that the zero set $\Gamma$ of $h$ has zero spherical measure.
The following finite area lemma yields this from
\eqref{eq:finite-area-identity}, the Lipschitz continuity of $h$, and the fact
that $\Gamma$ is the spherical section of a closed pointed convex cone.

\begin{lemma}[Finite area lemma]\label{lem:finite-area}
Let $m\geq2$, and $Q\subset\R^{m+1}$ be a nontrivial closed pointed convex
cone. Put
\[
    \Lambda:=Q\cap\Sph^m.
\]
Suppose that $f:\Sph^m\to[0,\infty)$ is Lipschitz and
\[
    f^{-1}(0)=\Lambda.
\]
If
\begin{equation}\label{eq:finite-hypothesis}
    \int_{\Sph^m\setminus\Lambda}\frac1f\,d\omega<\infty,
\end{equation}
then
\begin{equation}\label{eq:Lambda-dimension-bound}
    \dim_{\cH}\Lambda\leq m-1.
\end{equation}
In particular,
\begin{equation}\label{eq:Lambda-null}
    \omega(\Lambda)=\cH^m(\Lambda)=0.
\end{equation}
\end{lemma}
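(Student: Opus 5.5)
The plan is to show that if $\Lambda$ were $m$-dimensional it would have nonempty interior in $\Sph^m$, and that near the boundary of such a set the Lipschitz bound on $f$ makes $1/f$ non-integrable. This contradicts \eqref{eq:finite-hypothesis}, so $Q$ must lie in a hyperplane.

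\emph{Dichotomy for $Q$.} A closed convex cone either has nonempty interior in $\R^{m+1}$ or is contained in its linear span, which is then a proper subspace and hence lies in a hyperplane $H$. In the second case $\Lambda\subset H\cap\Sph^m$, a great $(m-1)$-sphere. This gives $\dim_{\cH}\Lambda\leq m-1$, and in particular \eqref{eq:Lambda-null}. So it suffices to rule out $\Int Q\neq\varnothing$.

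\emph{Setup in the case $\Int Q\neq\varnothing$.} Assume $Q$ has interior and pick $a\in\Lambda$ together with a spherical ball $B_r(a)\subset\Lambda$. Since $Q$ is pointed, $-a\notin Q$, so $-a\notin\Lambda$. Because $\Lambda$ is compact, some spherical ball $B_{2\delta}(-a)$ misses $\Lambda$. Use geodesic polar coordinates $(s,\theta)\in(0,\pi)\times\Sph^{m-1}$ centred at $a$, in which $d\omega=\sin^{m-1}s\,ds\,d\theta$. For each direction $\theta$ set
\[
    \rho(\theta):=\sup\{s: \exp_a(t\theta)\in\Lambda \text{ for } 0\le t\le s\}.
\]
Then $r\leq\rho(\theta)\leq\pi-2\delta$. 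Note that $\Lambda$ is geodesically convex, being a section of a convex cone inside an open hemisphere, though we only need the point $\exp_a(\rho(\theta)\theta)\in\Lambda$.

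\emph{Lipschitz bound and divergence.} Set $\varepsilon=\delta$. For $0<t<\varepsilon$, the point $\exp_a((\rho(\theta)+t)\theta)$ is at spherical distance at most $t$ from $\exp_a(\rho(\theta)\theta)\in\Lambda$, where $f$ vanishes. Hence, with $L$ the Lipschitz constant,
\[
    f\bigl(\exp_a((\rho(\theta)+t)\theta)\bigr)\leq Lt .
\]
These points lie off $\Lambda$ for almost every such $t$; where $f>0$ this is automatic, and points of $\Lambda$ contribute nothing to the integral. Since $\rho(\theta)+t\in[r,\pi-\delta]$, we have $\sin^{m-1}(\rho+t)\geq c>0$. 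Therefore
\[
    \int_{\Sph^m\setminus\Lambda}\frac1f\,d\omega
    \;\geq\;\int_{\Sph^{m-1}}\int_0^{\varepsilon}\frac{c}{Lt}\,\mathbf 1_{\{f>0\}}\,dt\,d\theta .
\]

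\emph{Main obstacle.} The point needing care is that a radial segment beyond $\rho(\theta)$ might re-enter $\Lambda$, which would remove part of the $t$-range. Geodesic convexity of $\Lambda$ excludes this: a great-circle arc leaving a convex set inside an open hemisphere does not return before reaching the antipodal region. Granting this, $f>0$ for all $0<t<\varepsilon$, and the inner integral $\int_0^{\varepsilon}dt/t$ diverges for every $\theta$. This contradicts \eqref{eq:finite-hypothesis}, so $\Int Q=\varnothing$ and the first case applies.
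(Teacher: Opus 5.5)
Your proof is correct, and it takes a genuinely different route from the paper's.

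\textbf{The paper's route.} It uses pointedness to pick $e\in\Int Q^\vee$, so that $\Lambda$ lies in the open hemisphere $H_e$. It then passes to the gnomonic chart, where $\Lambda$ becomes a compact convex set $E\subset e^\perp\simeq\R^m$. It proves the chart is bi-Lipschitz near $\Lambda$, which gives $\dim_{\cH}\Lambda=\dim_{\cH}E$. Finally it rules out $\Int E\neq\varnothing$ with a vertical collar $T(y,s)=(y,\varphi(y)+s)$ above a concave graph; this step needs $|\det DT|=1$ almost everywhere and the lower bound on $J_\Psi$.

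\textbf{Your route.} You split on the cone itself. If $\Int Q=\varnothing$, then $\Lambda$ lies in a great $(m-1)$-sphere. Otherwise you place the collar radially around an interior point $a$ of $\Lambda$, in geodesic polar coordinates. This avoids the chart, the comparison of Hausdorff dimensions, and the change of variables through a Lipschitz map. Tonelli in $(s,\theta)$ suffices, and since the inner integral diverges for every single $\theta$, you need no regularity of $\rho$.

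\textbf{The step you left as ``granting this''.} It needs one line. If $b=\exp_a(s\theta)\in\Lambda$ with $s<\pi$, then $\{\exp_a(u\theta):0\leq u\leq s\}$ is the radial projection of the chord $[a,b]$. That chord lies in $Q$ and misses the origin, so $\{s\in[0,\pi):\exp_a(s\theta)\in\Lambda\}=[0,\rho(\theta)]$. This uses only convexity of $Q$, not the open hemisphere you mention.

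\textbf{What each approach buys.} In your argument, pointedness enters only through $-a\notin Q$. That holds at every interior point of any closed convex cone $Q\neq\R^{m+1}$, since such a cone lies in a closed half-space bounded by a hyperplane through $0$. So your proof also covers, for example, half-spaces, where the gnomonic chart is unavailable. In exchange, the paper's chart lets all the convexity arguments be handled by standard Euclidean facts, rather than by spherical geodesic convexity.
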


\begin{proof}
Since $Q\neq\{0\}$, the set $\Lambda$ is nonempty.
Define the dual cone
\[
    Q^\vee
      :=\{e\in\R^{m+1}:\inner{e}{q}\geq0
          \text{ for all }q\in Q\}.
\]
Since $Q$ is closed and convex, the bipolar theorem gives
$(Q^\vee)^\vee=Q$.  Pointedness implies
$\Int Q^\vee\neq\varnothing$.  Indeed, if $Q^\vee$ had empty interior,
its linear span would be a proper subspace of $\R^{m+1}$.  A nonzero vector $v$
orthogonal to that span satisfies
$v,-v\in(Q^\vee)^\vee=Q$, contradicting pointedness.

Choose a unit vector $e\in\Int Q^\vee$.  Then
\begin{equation}\label{eq:e-positive}
    \inner{e}{q}>0,
    \qquad q\in Q\setminus\{0\}.
\end{equation}
If equality held for some nonzero $q$, a sufficiently small
perturbation of $e$ in the direction $-q$ would remain in $Q^\vee$ but
have negative scalar product with $q$.  By compactness of $\Lambda$,
\begin{equation}\label{eq:uniform-positivity-on-Lambda}
    \min_{n\in\Lambda}\inner{e}{n}>0.
\end{equation}
Thus $\Lambda$ is contained in the hemisphere
\[
    H_e:=\{n\in\Sph^m:\inner{e}{n}>0\}.
\]

Consider the gnomonic projection and its inverse
\begin{equation}\label{eq:gnomonic}
\begin{aligned}
    \Phi:H_e&\longrightarrow e^\perp,
      &\Phi(n)&=\frac{n}{\inner{e}{n}}-e,\\
    \Psi:e^\perp&\longrightarrow H_e,
      &\Psi(x)&=\frac{e+x}{\sqrt{1+|x|^2}}.
\end{aligned}
\end{equation}
Set
\begin{equation}\label{eq:E-section}
    E:=\Phi(\Lambda)
      =\bigl(Q\cap\{z:\inner{e}{z}=1\}\bigr)-e.
\end{equation}
The right hand side shows that $E$ is closed and convex.  It is
bounded, and hence compact.

For $x,v\in e^\perp$, direct differentiation gives
\begin{equation}\label{eq:dPsi}
    |d\Psi_x(v)|^2
      =\frac{|v|^2}{1+|x|^2}
        -\frac{\inner{x}{v}^2}{(1+|x|^2)^2}.
\end{equation}
Hence on every closed Euclidean ball $\overline B_R\subset e^\perp$,
\begin{equation}\label{eq:Psi-bilipschitz}
    \frac1{1+R^2}|v|
       \leq |d\Psi_x(v)|\leq |v|.
\end{equation}
The Jacobian of $\Psi$ is given by
\begin{equation}\label{eq:Psi-Jacobian}
    J_\Psi(x)=(1+|x|^2)^{-\frac{m+1}{2}},\qquad x\in e^\perp\simeq\R^m.
\end{equation}

We next compare the Hausdorff dimensions of $E$ and $\Lambda$.
Choose $R_1$ so large that $E\subset B_{R_1}(0)$.  Let
$x,y\in\overline B_{R_1}(0)$,
put $n=\Psi(x)$ and $n'=\Psi(y)$, and set
$a=\inner{e}{n}$ and $b=\inner{e}{n'}$.  Then
\[
    a,b\geq c_{R_1}:=(1+R_1^2)^{-\frac{1}{2}}.
\]
Since $x=\Phi(n)$ and $y=\Phi(n')$,
\begin{equation}\label{eq:Phi-distance}
\begin{aligned}
    |\Phi(n)-\Phi(n')|
      &=\left|\frac{n}{a}-\frac{n'}{b}\right|\\
      &\leq \frac{1}{a}|n-n'|
          +\frac{|a-b|}{ab}|n'|\\
      &\leq\bigl((c_{R_1})^{-1}+(c_{R_1})^{-2}\bigr)|n-n'|\\
      &\leq C_{R_1}d_{\Sph^m}(n,n').
\end{aligned}
\end{equation}
Conversely, applying $\Psi$ to the Euclidean segment from $x$ to $y$ and
using the upper bound in \eqref{eq:Psi-bilipschitz} gives
\begin{equation}\label{eq:Psi-distance}
    d_{\Sph^m}(\Psi(x),\Psi(y))\leq|x-y|.
\end{equation}
Thus
$\Psi|_{\overline B_{R_1}(0)}$, and hence $\Phi$ on the corresponding compact
set, is bi-Lipschitz.  In particular,
\begin{equation}\label{eq:dimension-preserved}
    \dim_{\cH}\Lambda=\dim_{\cH}E.
\end{equation}

Let $L>0$ be the Lipschitz constant of $f$.
For $x\in e^\perp\setminus E$ and $y\in E$, we also have \eqref{eq:Psi-distance}.
Since $f(\Psi(y))=0$, we obtain
\begin{equation}\label{eq:f-distance-E}
    f(\Psi(x))\leq L|x-y|.
\end{equation}

Suppose $E$ has nonempty interior in
$e^\perp\simeq\R^m$.  Choose orthonormal coordinates
$x=(y,t)\in\R^{m-1}\times\R$ such that the projection $P$ of $E$ onto
the $y$-space has nonempty interior.  Define
\[
    \varphi(y):=\max\{t:(y,t)\in E\},
    \qquad y\in\Int P.
\]
The function $\varphi$ is finite and concave, hence locally Lipschitz on
$\Int P$.  Choose an open Euclidean ball $B$ with
$\overline B\Subset\Int P$ such that $\varphi$ is Lipschitz on a
neighborhood of $\overline B$.  Define
\begin{equation}\label{eq:full-collar}
    T:B\times(0,\varepsilon)\longrightarrow\R^m,
    \qquad
    T(y,s):=(y,\varphi(y)+s).
\end{equation}
The image of $T$ is disjoint from $E$, and $T$ is injective and Lipschitz.
For almost every $y\in B$ and every $s\in(0,\varepsilon)$, the Jacobian 
matrix of $T$ is
\[
    DT=
    \begin{pmatrix}
       I_{m-1}&0\\
       D\varphi&1
    \end{pmatrix},
    \qquad
    |\det DT|=1.
\]
Since $(y,\varphi(y))\in E$, formula \eqref{eq:f-distance-E} gives
\begin{equation}\label{eq:f-collar-bound}
    f(\Psi(T(y,s)))\leq Ls.
\end{equation}
Since $T(B\times(0,\varepsilon))$ is disjoint from $E$, its image under
$\Psi$ lies in $\Sph^m\setminus\Lambda$.
Choose $R_2$ so that $T(B\times(0,\varepsilon))\subset B_{R_2}(0)$. By
\eqref{eq:Psi-Jacobian},
\[
    J_\Psi\geq c_{R_2}:=(1+R_2^2)^{-\frac{m+1}{2}}.
\]
Using \eqref{eq:f-collar-bound}, we obtain
\[
\begin{aligned}
    \int_{\Sph^m\setminus\Lambda}\frac1f\,d\omega
      &\geq
        \int_{T(B\times(0,\varepsilon))}
          \frac{J_\Psi(x)}{f(\Psi(x))}\,dx\\
      &=\int_B\int_0^\varepsilon
        \frac{J_\Psi(T(y,s))}{f(\Psi(T(y,s)))}
        |\det DT(y,s)|\,ds\,dy\\
      &\geq\frac{c_{R_2}}{L}|B|
        \int_0^\varepsilon\frac{ds}{s}
       =\infty,
\end{aligned}
\]
contrary to \eqref{eq:finite-hypothesis}. Thus $E$ has empty interior. Since $E$ is convex, it follows that
\[
    \dim_{\cH}\Lambda=\dim_{\cH}E\leq m-1.
\]
This proves \eqref{eq:Lambda-dimension-bound}, and \eqref{eq:Lambda-null} follows.
\end{proof} 

We apply the lemma to the support function $h$ of the polar body $C$.

\begin{proposition}\label{prop:Gamma-null}
Under the assumption $U\neq M$, the set $\Gamma$ satisfies
\begin{equation}\label{eq:Gamma-nonempty-null}
    \Gamma\neq\varnothing,
    \qquad
    \omega(\Gamma)=0.
\end{equation}
In fact, $\dim_{\cH}\Gamma\leq m-1$.
\end{proposition}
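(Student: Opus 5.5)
The plan is to apply the Finite area lemma (Lemma~\ref{lem:finite-area}) with $Q=\cR$, $\Lambda=\Gamma$ and $f=h$.

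\textbf{Hypotheses on the cone.} By Lemma~\ref{lem:recession-facts}, $\cR$ is a closed convex cone. Part (ii) of that lemma shows $\cR\neq\{0\}$, so it is nontrivial and $\Gamma\neq\varnothing$. Part (iii) shows $\cR$ is pointed. Thus $Q=\cR$ meets the cone hypotheses, with $\Lambda=\cR\cap\Sph^m=\Gamma$.

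\textbf{Hypotheses on the function.} Take $f=h=h_C$, the support function of the polar body $C$. It is nonnegative because $0\in C$ (Lemma~\ref{lem:C-body}). It is Lipschitz on $\Sph^m$ with constant $R_0$: for $n,n'\in\Sph^m$ and any $x\in C\subset\overline B_{R_0}(0)$,
\[
\inner{x}{n}\le\inner{x}{n'}+R_0|n-n'|,
\]
so $|h(n)-h(n')|\le R_0|n-n'|$. Since chordal distance is at most geodesic distance, this gives Lipschitz continuity in the spherical metric as well. By \eqref{eq:h-zero-set} of Proposition~\ref{prop:h-identification}, $h^{-1}(0)=\Gamma=\Lambda$.

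\textbf{Integrability.} The condition \eqref{eq:finite-hypothesis} is exactly \eqref{eq:finite-area-identity} of Proposition~\ref{prop:local-support-identities}:
\[
\int_D\frac1h\,d\omega=\Vol_g(U)<\infty,\qquad D=\Sph^m\setminus\Gamma.
\]
It is finite because $U$ is an open subset of the closed manifold $M$.

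\textbf{Conclusion.} Lemma~\ref{lem:finite-area} now gives $\dim_{\cH}\Gamma\le m-1$ and $\omega(\Gamma)=0$. Together with the nonemptiness above, this is the statement.

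There is no substantial obstacle. The only points needing care are confirming the Lipschitz property with respect to the metric used in the lemma, which is handled by the chordal–geodesic comparison, and noting that the nonemptiness of $\Gamma$ rests on the standing assumption $U\neq M$. That assumption enters through Proposition~\ref{prop:C-star}, where it makes $C^*$ unbounded, which in turn gives Lemma~\ref{lem:recession-facts}(ii).
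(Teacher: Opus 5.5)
Your proposal is correct and follows the paper's proof: you apply the Finite area lemma with $Q=\cR$ and $f=h$. Nonemptiness comes from Lemma~\ref{lem:recession-facts}(ii), the zero set from Proposition~\ref{prop:h-identification}, and integrability from \eqref{eq:finite-area-identity}. You also spell out the Lipschitz constant $R_0$ and the chordal--geodesic comparison, which the paper leaves implicit.
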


\begin{proof}
Lemma~\ref{lem:recession-facts}(ii) gives $\Gamma\neq\varnothing$.  Since
$C$ is compact, its support function is Lipschitz.
Proposition~\ref{prop:h-identification} gives $h^{-1}(0)=\Gamma$, and
Proposition~\ref{prop:local-support-identities} gives
\[
    \int_{\Sph^m\setminus\Gamma}\frac1h\,d\omega<\infty.
\]
Applying Lemma~\ref{lem:finite-area} with $Q=\cR$ and $f=h$ proves the
claim.
\end{proof}

Thus the set $\Gamma$ has zero spherical measure.
This allows us to extend the local surface area formula to the whole sphere.

\begin{proposition}\label{prop:polar-body-measure-identity}
If $U\neq M$, then the polar body $C$ satisfies
\begin{equation}\label{eq:polar-body-measure-identity}
    h\,dS_C=d\omega
\end{equation}
as Borel measures on $\Sph^m$.
\end{proposition}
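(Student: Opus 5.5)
The plan is to split $\Sph^m=D\cup\Gamma$ and verify the identity \eqref{eq:polar-body-measure-identity} separately on Borel subsets of $D$ and of $\Gamma$.

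\emph{Step 1: the set $\Gamma$.} By Proposition~\ref{prop:h-identification}, $h=0$ on $\Gamma$, so $\int_{B\cap\Gamma}h\,dS_C=0$. By Proposition~\ref{prop:Gamma-null}, $\omega(B\cap\Gamma)=0$. Hence both measures vanish on every Borel $B\subset\Gamma$. This step needs no information about $S_C(\Gamma)$, which may be positive; the factor $h$ kills it.

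\emph{Step 2: the open set $D$.} By \eqref{eq:h-on-D}, $h$ is smooth on $D$. By \eqref{eq:radii-A-inverse}, the tensor $r[h]=\bar\nabla^2h+h\sigma$ is positive definite there. The local surface area formula \eqref{eq:area-density} therefore applies on $D$ and gives $dS_C=\det_\sigma(r[h])\,d\omega$. Combining this with \eqref{eq:curvature-radius-determinant} yields
\[
    dS_C=\frac1h\,d\omega
    \qquad\text{on }D.
\]
Multiplying by the continuous function $h$, which is positive on $D$, gives $h\,dS_C=d\omega$ on Borel subsets of $D$. To be careful about applying \eqref{eq:area-density} on a noncompact open set, I would cover $D$ by countably many small open geodesic balls $V_j\Subset D$. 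On each closure $h$ is $C^2$ with $r[h]>0$, so the identity holds on each $V_j$ and extends to all of $D$ by countable additivity. The quantity $S_C(D)=\int_D h^{-1}\,d\omega$ is finite by \eqref{eq:finite-area-identity}, which is consistent with $S_C$ being a finite measure.

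\emph{Step 3: conclusion.} For an arbitrary Borel set $B$, add the two pieces $B\cap D$ and $B\cap\Gamma$ to obtain the identity on $B$.

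The only delicate point is Step 2. One must justify that the surface area measure of the convex body $C$ restricted to $D$ is given by the local formula, even though $h$ is not known to be $C^2$ near $\Gamma$. This is exactly the locality remark after \eqref{eq:area-density}, citing Schneider and Colesanti--Hug--Saor\'in. Concretely, the boundary points of $C$ with normals in $V_j$ form the smooth strictly convex patch $\widetilde X(V_j)$ from \eqref{eq:X-support}, whose Jacobian with respect to $\omega$ is $\det A^{-1}=1/h$.

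One further check is needed so that Step 1 loses nothing: no point of $\partial C$ with a normal in $D$ should also carry a normal in $\Gamma$ in a way that distorts $S_C$ on $D$. This is not an issue, because $S_C(B)$ is defined through the reverse Gauss image of $B$. The reverse Gauss images of subsets of $V_j$ are the patches $\widetilde X(V_j\cap\cdot)$, and these are handled by the area formula regardless of any additional normals those points may have.
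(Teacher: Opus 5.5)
Your proposal is correct and follows the same route as the paper. Both split a Borel set into its parts in $D$ and $\Gamma$, use \eqref{eq:area-density} with \eqref{eq:curvature-radius-determinant} to get $dS_C=h^{-1}\,d\omega$ on $D$, and dispose of $\Gamma$ via $h|_\Gamma=0$ and $\omega(\Gamma)=0$ from Proposition~\ref{prop:Gamma-null}; your countable cover of $D$ and your identification of the reverse Gauss image of $B\subset D$ with $\widetilde X(B)$ only make explicit the locality that the paper takes from the cited references.
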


\begin{proof}
On $D$, Proposition~\ref{prop:local-support-identities} gives
\[
    \det_\sigma(\bar\nabla^2h+h\sigma)=\frac1h.
\]
Since $h$ is smooth on $D$, the local surface area formula 
\eqref{eq:area-density} gives
\begin{equation}\label{eq:SC-on-D}
    S_C(B)=\int_B\frac1h\,d\omega
\end{equation}
for every Borel set $B\subset D$.

Let $B\subset\Sph^m$ be an arbitrary Borel set.  Splitting it into its
parts in $D$ and $\Gamma$, using \eqref{eq:SC-on-D} and
$h|_\Gamma=0$, we obtain
\[
\begin{aligned}
    \int_B h\,dS_C
      &=\int_{B\cap D}h\,dS_C
        +\int_{B\cap\Gamma}h\,dS_C\\
      &=\int_{B\cap D}d\omega.
\end{aligned}
\]
Proposition~\ref{prop:Gamma-null} gives
$\omega(\Gamma)=0$, so the last quantity equals $\omega(B)$. This
proves \eqref{eq:polar-body-measure-identity}. 
\end{proof}

\section{Proof of Theorem~\ref{thm:main}}\label{sec:proof-main-theorem}

For the normal chosen in Lemma~\ref{lem:max-radius}, 
we have $A>0$ on the component $U$. Suppose that $U\neq M$.
The polar body $C$ contains the origin, and
Proposition~\ref{prop:polar-body-measure-identity} gives
\[
    h\,dS_C=d\omega.
\]
Lemma~\ref{lem:interiority} therefore yields $0\in\Int C$.  Hence
$h>0$ on $\Sph^m$.  This contradicts
Proposition~\ref{prop:h-identification} and
Lemma~\ref{lem:recession-facts}\textup{(ii)}, which give a nonempty set
$\Gamma$ on which $h=0$.  Therefore $U=M$.  Hence, with respect to the
selected normal,
\begin{equation}\label{eq:global-convexity}
    A>0,
    \qquad
    K>0
    \qquad\text{on }M.
\end{equation}

Since $M$ is compact, the metric induced by $X$ is complete.  By
\eqref{eq:global-convexity} we know that $X$ is strictly convex everywhere.
Theorem~\ref{thm:van-heijenoort} now applied to the original immersion
$X$ shows that $X$ is an embedding onto the boundary of a convex body.
Since $A>0$ with our convention $A=d\nu$, the selected normal $\nu$ is
the outward normal. Hence Theorem~\ref{thm:BCD} shows that
$X(M)$ is a round sphere.

Write this sphere as $\partial B_R(c)$, where $B_R(c)$ is a Euclidean ball 
in $\R^{m+1}$ centred at $c$ with $R>0$. At the point $X=c+Rn$, with $n\in\Sph^m$, 
the shrinker equation \eqref{eq:shrinker} becomes
\begin{equation}\label{eq:sphere-normalization}
    R^{-m}=\inner{c+Rn}{n}
      =\inner{c}{n}+R,
    \qquad n\in\Sph^m.
\end{equation}
Applying this equality at $n$ and $-n$ gives $\inner{c}{n}=0$ for every $n$, 
so $c=0$. The equation then gives $R^{-m}=R$, hence $R=1$. Therefore $X(M)=\Sph^m(1)$.

\noindent\textbf{Acknowledgments.}
The author would like to thank his supervisor Professor Hui Ma for invaluable 
support and helpful discussions. The author was supported by the National 
Natural Science Foundation of China (Grant Nos.~12471048 and W2521103).

\end{document}